\documentclass[reqno,12pt]{amsart}
\usepackage{ragged2e}
\usepackage{amssymb}
\usepackage{amsfonts}
\usepackage{amsmath}
\usepackage{mathrsfs}
\usepackage{diagbox}
\usepackage{color,soul}
\usepackage{bbm}
\usepackage{hyperref}
\usepackage{amsthm, graphicx,empheq}
\usepackage{framed}
\setlength{\oddsidemargin}{0cm} \setlength{\evensidemargin}{0cm}
\setlength{\topmargin}{-0.5cm} \setlength{\footskip}{1cm}
\setlength{\textheight}{23.5cm} \setlength{\textwidth}{16.5cm}

\newtheorem{lemma}{Lemma}[section]
\newtheorem{theorem}{Theorem}[section]
\newtheorem{definition}{Definition}[section]
\newtheorem{remark}{Remark}[section]
\numberwithin{equation}{section}

\makeatletter
\@namedef{subjclassname@2020}{\textup{2020} Mathematics Subject Classification}
\makeatother
\allowdisplaybreaks[4]

\begin{document}

\title[String Vibration Equations with Elastic Supports]{Mathematical Analysis and Numerical Computation of String Vibration Equations with Elastic Supports for Bridge Cable Force Evaluation}

\author{MINHUI TAN}
\author{QING XU}
\author{HAIRONG YUAN}
\author{MAN XU}
\author{KE LIU}
\author{AIFANG QU}
\author{XIAODA XU}
\address[M. Tan]{Department of Mathematics, Shanghai Normal University,
Shanghai, 200234, China}\email{\tt 1000465950@smail.shnu.edu.cn}

\address[Q. Xu]{Central Research  Institute of Building and Construction Co., Ltd. MCC Group, Beijing 100088, China \& Department of Civil Engineering, Tsinghua University, Beijing 100084, China}\email{\tt a\_qing01@foxmail.com}

\address[H. Yuan]{School of Mathematical Sciences, Key Laboratory of MEA (Ministry of Education) \& Shanghai Key Laboratory of PMMP, East China Normal University, Shanghai 200241, China}\email{\tt hryuan@math.ecnu.edu.cn}

\address[M. Xu]{Central Research  Institute of Building and Construction Co., Ltd.MCC Group, Beijing 100088, China}\email{\tt manman3112899@126.com}

\address[K. Liu]{School of Mathematical Sciences, Key Laboratory of MEA (Ministry of Education) \& Shanghai Key Laboratory of PMMP, East China Normal University, Shanghai 200241, China}\email{\tt 52275500033@stu.encu.edu.cn}

\address[A. Qu]{Department of Mathematics, Shanghai Normal University,
Shanghai, 200234, China}\email{\tt afqu@shnu.edu.cn}

\address[X. Xu]{Central Research  Institute of Building and Construction Co., Ltd.MCC Group, Beijing 100088, China}\email{\tt 13115279@bjtu.edu.cn}

\keywords{Vibration equations; Concentrated external forces; Dirac measure; Tension force of prestressed cables; Mathematical modeling; Physics-Informed Neural Networks}

\subjclass[2020]{35L05, 35L20, 35L67,35Q74, 65N35, 74F20, 74K10}

\date{\today}

\begin{abstract}
This study focuses on a critical aspect of bridge engineering---the evaluation of cable forces, paying particular attention to the cables that are internally constrained by elastic supports. Detecting these cable forces is important for the safety and stability of bridges. The practical problem introduces a novel mathematical challenge: how to effectively address string vibration equations with one or multiple internal elastic supports, which remains a theoretical issue not fully solved in engineering. To tackle this, it is necessary to firstly establish an appropriate mathematical model and accurately define initial-boundary value problems. We then formulate the well-posedness of the solution using both classical and weak solution approaches, supplementing the existing numerical results available in engineering. Meanwhile, we attempt to use PINNs (Physics-Informed Neural Networks) instead of traditional FEM (Finite Element Method) in engineering. Consequently, in contrast to the classical solution method, we demonstrate that for a string with finite elastic supports, the weak solution method not only improves mathematical modeling efficiency but also simplifies the process of explaining the well-posedness of the solution.
\end{abstract}

\allowbreak
\allowdisplaybreaks

\maketitle

\tableofcontents 

\section{Introduction}\label{sec1}
\par
In this paper, we would like to study string vibration equations with a finite number of elastic supports, and the initial-boundary value problem is demonstrated as
\begin{equation}\label{eq1.1}
\left\{\begin{array}{l}
u_{t t}-a^{2} u_{x x}=\sum\limits_{k=1}^{n}-\beta_k u\left(x_{k}, t\right) \delta_{x=x_k},  (x, t) \in(0, L) \times(0, T], \\
u(0,t)=0, \quad u(L,t)=0,  t\in(0, T], \\
u(x,0)=\varphi(x), \quad u_{t}(x,0)=\psi(x),  x\in[0, L],
\end{array}\right.
\end{equation}
where $L$ and $\beta_k$ are positive constants and $x_k\in(0,L) (k=1,2,\dots,n)$. We point out that in \eqref{eq1.1}, only the independent variables $x$ and $t$ are included, in which $x$ represents position, $t$ represents time, and $a$ is a positive constant, $\varphi(x)$, $\psi(x)$ are given functions while $u(x, t)$ is a unknown function and $\delta_{x=x_k}$ is the so called Dirac measure supported at $x=x_k$.
\par
The problem \eqref{eq1.1} stems from a key issue in bridge engineering. It is well known that prestressed cables play a crucial role in these large-span structures because these special materials can not only support the weight but also help to control the overall sturdiness of the structures. However, during the usage, many factors such as continuous heavy pressure, corrosion or fatigue, can redistribute the internal forces within the structures. Therefore, detecting cable tension is important for ensuring the construction quality and safety, as well as for routine maintenance and monitoring.
\par
The current methods for measuring cable tension include pressure sensor testing, magnetic flux testing, frequency analysis, etc.. Among these, frequency analysis is one of the most practical and convenient  method. There are two general engineering models for frequency analysis called the taut-string model and the beam model \cite{MT1998,CWCL2018,GFD2021,FW2012} which are both based on the assumptions that prestressed cables are directly connected to the structure at both ends and without internally-constrained supports. Nevertheless, in practical engineering, to effectively control the vibration of prestressed cables, the middle part of the cables is often equipped with connectors, such as cable clamps, dampers and support frames, thus giving the cables a multi-support characteristic. As far as we know, the method of ordinary differential equations is used to describe the boundary value problems of cables and beams with concentrated loads,  and the importance of such problems is emphasized \cite{ICA1989}. However, there is no reference that starts from the basic vibration equation of an internally constrained cable and explores its difference from the typical vibration equations \cite{GF2015}.
\par
Specifically, we focus on the motion and vibration frequency of prestressed cables, disregarding the natural vibrations of the struts on these cables. Thus, in current engineering practices, the tensioned cables of bridges are often idealized as strings, and the struts on these cables are simplified to elastic supports with motion only in the vertical direction, providing concentrated external forces. All these modelings lead to string vibrations equations with a finite number of elastic supports, i.e. the problem \eqref{eq1.1}.
\par
As seen in \eqref{eq1.1}, $u(x,t)$ is the displacement of each point on the string in the direction perpendicular to the $x$-axis. With the total length of the string being $L$, the ends of the string are fixed at $x=0$ and $x=L$, and the elastic support is placed at $x=x_k (0<x_k<L)$. It is also assumed that the initial position and velocity of the string at time $t=0$ are $\varphi(x)$ and $\psi(x)$ repectively. Moreover, $a^2=\frac{T}{\rho}$, $\beta_{k}=\frac{\mathsf{K}_{k}}{\rho}$, where $\rho$ represents the linear density of the cable material, $T$ is the cable force, ${\mathsf{K}_k}$ is the stiffness coefficient corresponding to each elastic support at $x=x_k$.
\par
This specific practical problem \eqref{eq1.1} remains an unresolved theoretical issue in the engineering, leading to a new mathematical problem---how to solve an initial-boundary value problem with a second-order linear hyperbolic equation containing point-source terms characterizing as external forces and so on. In fact, as far as we know, previous results such as \cite{CR1962}, have not considered the solution of an initial-boundary value problem for string vibration equations with Dirac measures as external force terms, as well as how to discuss the well-posedness of the solution. Thus, regular methods like Fourier transform cannot be directly applied. Therefore, this research aims to provide rigorous physical modeling methods, thereby exploring a new class of second-order hyperbolic equations including Dirac measures. Also, we would like to present theoretical foundations for engineering practices and assumptions. In this work, we approach from a rigorous mathematical perspective, focusing directly on the definition and well-posedness of the solution to \eqref{eq1.1}.
\par
In fact, to explore this bridge engineering problem, we initially seek alternatives to the physical model \eqref{eq1.1} in the beginning of our research. Starting from an intuitive understanding of the physical form, we reconstruct a new physical model without point-source external force, present the corresponding initial-boundary value problems, and achieve both the formal analytical expression and the well-posedness of the solution. Simultaneously, we attempt to use PINNs (Physics-Informed Neural Networks) and observe stabilizing effects of elastic supports to some extent, in accordance with engineering requirements and conclusions. Additionally, we compare the theoretical values of string vibration frequencies obtained from the physical model in the sense of classical solution with corresponding numerical results brought by FEM (Finite Element Method) in engineering with a minor discrepancy rate. Ultimately, by comparing these two different physical models, we find that the weak solution approach corresponding to \eqref{eq1.1} is superior in handling the problem with a finite number of elastic supports than the classical solution approach with a higher efficiency of physical modeling and discussion for the properties of solution.
\par
The following content of this paper is divided into three sections. In the second section, by observing the intuitive physical model, we propose a new coupled problem, which is completely different from \eqref{eq1.1}, to formulate the string vibration equation with elastic supports. Under this circumstance, we obtain the theoretical solution involving implicit functions, illustrate the well-posedness of the solution, and seek to fit this intuitive physical model by using PINNs. In the third section, we directly conduct a study of the physical model \eqref{eq1.1} as proposed in engineering contexts. For this new type of initial-boundary value problem \eqref{eq1.1} including Dirac measures, we provide the definition of solutions in the sense of distribution, which is one of the main difficulties and results in our work. We also establish the well-posedness of the solutions for our physical model \eqref{eq1.1}, and offer a more rigorous mathematical basis for engineering. In the end, we compare these two physical models from a mathematical theoretical perspective and conclude that the physical model \eqref{eq1.1} is superior to the one under the classical solution approach, which is of significant guidance for engineering practices. In the fourth section, we summarize our main conclusions and raise some further questions.
\section{The wave equation with one elastic support in a classical sense}\label{sec2}
Here, we introduce a new perspective of the specific problem in engineering, which includes an established classical solution theory corresponding to its physical model.
\subsection{Mathematical modeling of the wave equation with one elastic support in a classical sense}\label{subsec2.1}
By achieving an intuitive understanding in physical form of \eqref{eq1.1}, we directly divide a string into several parts according to the position of elastic supports. To begin with, we consider the wave equation with a single elastic support in the sense of classical solution for convenience and the elastic support is placed at $x=l (0<l<L)$. Clearly, the entire string is divided by the elastic support into two segments $x\in[0,l]$ and $x\in[l,L]$. For the first segment of the string, it satisfies the wave equation $u_{tt}-a^{2} u_{x x}=0$. Since the left end is fixed, it satisfies $u(0,t)=0$. Meanwhile, its right end is fixed on the elastic support, implying that the extension and contraction of the support obey Hooke's Law. If the original position of the elastic support is $u=0$, then the value of $u$ at the endpoint $x=l$ represents the extension of the support at that point.

According to Hooke's Law, the tension force of the string on the support is $\mathsf{K}u$, where $\mathsf{K}$ is the stiffness coefficient. This force is equal in magnitude but opposite in sign to the resultant force acting jointly from the left and right segments of the string. However, we notice that the vertical component of the tension force exerted by the support on the left segment of the string is $T\frac{\partial u}{\partial x}(l-0,t)$, and the vertical component of the tension force on the right segment of the string is $-T\frac{\partial u}{\partial x}(l+0,t)$. Therefore, at the position of elastic support with motion only in the vertical direction, the boundary condition is reduced to
\begin{equation}\label{eq2.3}
-u_{x}\left(l-0, t\right)+u_{x}\left(l+0, t\right)=\sigma u(l, t),
\end{equation}
where $\sigma=\frac{\mathsf{K}}{T}$ is a known positive constant. As a part of the entire string, this segment naturally satisfies the initial conditions $u(x, 0)=\varphi(x)$ and $u_{t}(x, 0)=\psi(x)$. Therefore, the initial boundary value problem satisfied by the string at the left end of the elastic support is as follows
\begin{equation}\label{eq2.4}
\begin{array}{l}
\left\{\begin{array}{l}
u_{tt}-a^{2} u_{x x}=0,  0<x<l,  t>0,\\
u(0, t)=0,  -u_{x}\left(l-0, t\right)+u_{x}\left(l+0, t\right)=\sigma u(l, t),\\
u(x, 0)=\varphi(x),  u_{t}(x, 0)=\psi(x).
\end{array}\right.
\end{array}
\end{equation}
Similarly, the initial boundary value problem satisfied by the string at the right end of the elastic support is
\begin{equation}\label{eq2.5}
\begin{array}{l}
\left\{\begin{array}{l}
u_{tt}-a^{2} u_{x x}=0,  l<x<L,  t>0,\\
u(L, t)=0,  -u_{x}\left(l-0, t\right)+u_{x}\left(l+0, t\right)=\sigma u(l, t),\\
u(x, 0)=\varphi(x),  u_{t}(x, 0)=\psi(x).
\end{array}\right.
\end{array}
\end{equation}
These two initial boundary value problems should be coupled together through condition \eqref{eq2.3}.
Moreover, these two segments should also satisfy the continuity condition
\begin{equation}\label{eq2.2}
u\left(l-0, t\right)=u\left(l+0, t\right)
\end{equation}
at $x=l$.
\par
Therefore, the problem of string vibration involving a concentrated external force is transformed into solving the coupled problems \eqref{eq2.3} \eqref{eq2.4} \eqref{eq2.5} and \eqref{eq2.2}.
It can be noticed, in modeling the vibration problem of a string with elastic support, the advantage of this model compared to \eqref{eq1.1} is that the equation no longer includes external force terms with a singular Dirac measure as the source term. The drawback, however, is that a single equation with Dirichlet initial-boundary value problem is transformed into two coupled initial-boundary value problems with third-type boundary conditions, resulting in a greater number of and more complex boundary conditions being provided. Nevertheless, this new model allows the original problem to be studied under established theories, thereby also providing new insights for the study of \eqref{eq1.1}. Of course, the solution to this coupled problem is not that immediately clear.

\subsection{Uniqueness and stability of the classical solution}\label{subsec2.1}
\par
Firstly, we are going to consider the uniqueness and stability of the solution to the boundary value problems if there exists a classical solution $u\in C^2$ to \eqref{eq2.3}-\eqref{eq2.2} for investigating whether there exist multiple solutions, and whether the solution depends continuously on the initial-boundary conditions.
\par
We assume that $u(x,t)$ is $C^1$ with respect to time $t$ at $x=l$ and introduce the energy integrals for equations \eqref{eq2.3} \eqref{eq2.4} \eqref{eq2.5} and \eqref{eq2.2}
\begin{equation}\label{eq2.57}
E(t)=\int_{0}^{l} (u_{t}^{2}+a^{2} u_{x}^{2}) \mathrm{d} x+\int_{l}^{L}(u_{t}^{2}+a^{2} u_{x}^{2}) \mathrm{d} x+a^{2} \sigma u^{2}(l, t).
\end{equation}
We observe the variation of $E(t)$ with respect to time $t$:
\begin{equation}\label{eq2.58}
\begin{aligned}
\frac{\mathrm{d} E(t)}{\mathrm{d} t}= & \int_{0}^{l} \left(2 u_{t}u_{t t}+2 a^{2} u_{x} u_{x t}\right) \mathrm{d} x+ \int_{l}^{L} \left(2 u_{t}u_{t t}+2 a^{2} u_{x} u_{x t}\right) \mathrm{d} x+2 a^{2} \sigma u(l, t) u_{t}(l, t) \\
= & 2 \int_{0}^{l} \left(u_{t} u_{t t}+a^{2}\left[\left(u_{x} u_{t}\right)_{x}-u_{t} u_{x x}\right]\right) \mathrm{d} x+2 \int_{l}^{L} \left(u_{t} u_{t t}+a^{2}\left[\left(u_{x} u_{t}\right)_{x}-u_{t} u_{x x}\right]\right) \mathrm{d}x \\
&+2 a^{2} \sigma u(l, t) u_{t}(l, t) \\
=&2a^{2}u_{x}(l-0,t)u_{t}(l-0,t)-2a^{2}u_{x}(l+0,t)u_{t}(l+0,t)+2 a^{2} \sigma u(l, t) u_{t}(l, t) \\
=&2a^{2}u_{x}(l+0,t)u_{t}(l-0,t)-2a^{2}u_{x}(l+0,t)u_{t}(l+0,t)-2 a^{2} \sigma u(l, t) u_{t}(l-0, t)\\
&+2 a^{2} \sigma u(l, t) u_{t}(l, t)\\
=&0,
\end{aligned}
\end{equation}
where we use \eqref{eq2.3} in the second-to-last equal sign. This implies $\frac{\mathrm{d}E(t)}{\mathrm{d}t}=0$, hence $E(t)$ is a constant with respect to $t$, i.e. $E(t) \equiv E(0)$. The fact of this conservation of total energy leads to
\begin{theorem}\label{thm2.1}
If a classical solution to the initial-boundary value problem \eqref{eq2.3} \eqref{eq2.4} \eqref{eq2.5} and\\\eqref{eq2.2} exists, it must be unique.
\end{theorem}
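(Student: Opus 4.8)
The plan is to combine the energy conservation identity already established in \eqref{eq2.57}--\eqref{eq2.58} with the linearity of the coupled system. First I would suppose that $u_1$ and $u_2$ are two classical solutions of \eqref{eq2.3}\eqref{eq2.4}\eqref{eq2.5}\eqref{eq2.2} corresponding to the same data $\varphi,\psi$, and set $w:=u_1-u_2$. Since the wave equation on each of $(0,l)$ and $(l,L)$, the fixed-end conditions $u(0,t)=u(L,t)=0$, the jump condition \eqref{eq2.3}, and the continuity condition \eqref{eq2.2} are all linear and homogeneous, $w$ solves exactly the same coupled problem but with vanishing data, i.e. $w(x,0)\equiv 0$ and $w_t(x,0)\equiv 0$. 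Moreover $w$ inherits the regularity assumed of the solutions ($C^2$ up to the boundary on each closed subinterval, and $C^1$ in $t$ at $x=l$), so the differentiation-under-the-integral and integration-by-parts computation carried out in \eqref{eq2.58} applies to $w$ verbatim.

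Next I would form the energy $E(t)$ of \eqref{eq2.57} with $u$ replaced by $w$ and invoke \eqref{eq2.58} to get $E(t)\equiv E(0)$ on $(0,T]$. From $w(\cdot,0)\equiv 0$ we obtain $w_x(\cdot,0)\equiv 0$ and $w(l,0)=0$, and $w_t(\cdot,0)\equiv 0$ is given; hence every one of the three summands in \eqref{eq2.57} vanishes at $t=0$, so $E(0)=0$ and therefore $E(t)=0$ for all $t\in(0,T]$. Because $a^2>0$ and $\sigma>0$, each summand in \eqref{eq2.57} is nonnegative, so their sum being zero forces $w_t(x,t)=0$ and $w_x(x,t)=0$ for every $x\in(0,l)\cup(l,L)$ and every $t\in(0,T]$ (and, incidentally, $w(l,t)=0$).

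Finally, $w_x\equiv 0$ and $w_t\equiv 0$ on the connected set $(0,l)\times(0,T)$ make $w$ constant there, and by continuity constant on $[0,l]\times(0,T]$; the fixed-end condition $w(0,t)=0$ identifies this constant as $0$. The same argument on $[l,L]$ using $w(L,t)=0$ gives $w\equiv 0$ on $[l,L]\times(0,T]$. Hence $w\equiv 0$, that is, $u_1\equiv u_2$, which is the claimed uniqueness.

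The one point I would treat with care is the behavior at the interior node $x=l$: the cancellation of boundary contributions in \eqref{eq2.58} relies on the one-sided limits $w_x(l\mp 0,t)$ and $w_t(l\mp 0,t)$ existing and being continuous enough for the integration by parts on each subinterval to be valid, and on these limits combining exactly through \eqref{eq2.3} and the continuity \eqref{eq2.2}. This is precisely what the standing regularity hypotheses recorded just before \eqref{eq2.57} guarantee, so no additional estimates are needed there; everything else is the standard linearity-plus-vanishing-energy argument, and in fact the substantive computation is already contained in \eqref{eq2.58}.
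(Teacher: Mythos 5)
Your proposal is correct and follows essentially the same route as the paper: apply the conserved energy \eqref{eq2.57}--\eqref{eq2.58} to the difference $w=u_1-u_2$, observe that the homogeneous data force $E(0)=0$, and conclude $w_x\equiv w_t\equiv 0$, hence $w\equiv 0$. The only cosmetic difference is that you pin down the constant via the fixed-end conditions $w(0,t)=w(L,t)=0$, whereas the paper uses $w(x,0)=0$; both are valid.
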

\begin{proof}
Let $u_1$ and $u_2$ be two solutions to the given initial-boundary value problem \eqref{eq2.3}-\eqref{eq2.2}.\\Their difference $u=u_1-u_2$ satisfies the corresponding homogeneous equation and homogeneous boundary conditions as follows
\begin{equation}\label{eq2.59}
\begin{array}{l}
\left\{\begin{array}{l}
u_{tt}-a^{2} u_{xx}=0,  0<x<l, l<x<L,  t>0,\\
u(0,t)=0,  u(L,t)=0,\\
u(x, 0)=0,  u_{t}(x, 0)=0,\\
\end{array}\right. \\
\end{array}\\
\end{equation}
with
\begin{center}
$u\left(l-0, t\right)=u\left(l+0, t\right)$ and $-u_{x}\left(l-0, t\right)+u_{x}\left(l+0, t\right)=\sigma u(l, t)$.
\end{center}
Thus, at the initial time $t=0$, we have
\begin{equation}\label{eq2.60}
E(0)=\int_{0}^{l}\left( u_{t}^{2}(x, 0)+a^{2} u_{x}^{2}(x, 0)\right) \mathrm{d} x+\int_{l}^{L}\left(u_{t}^{2}(x, 0)+a^{2} u_{x}^{2}(x, 0)\right) \mathrm{d} x+a^{2} \sigma u^{2}(l, 0),
\end{equation}
and according to $\eqref{eq2.59}_3$, we notice that
\begin{equation}\label{eq2.61}
u_{x}(x, 0)=\lim _{\Delta x \rightarrow 0} \frac{u(x+\Delta x, 0)-u(x, 0)}{\Delta x}=0.
\end{equation}
Combining with the initial and boundary conditions of the problem \eqref{eq2.59}, we obtain $E(0)=0$. Therefore, we have
\begin{equation}\label{eq2.62}
E(t)=\int_{0}^{l} \left(u_{t}^{2}+a^{2} u_{x}^{2}\right) \mathrm{d} x+\int_{l}^{L}\left(u_{t}^{2}+a^{2} u_{x}^{2}\right) \mathrm{d} x+a^{2} \sigma u^{2}(l, t)=0.
\end{equation}
That is, $u_t=u_x=0$. Furthermore, since $u=0$ when $t=0$, we conclude that $u(x, t) \equiv 0$. Thus, the uniqueness of solution to equations \eqref{eq2.3}-\eqref{eq2.2} is established.
\end{proof}
\par
Through the energy method, we can further demonstrate the stability of the initial conditions for the initial-boundary value problem \eqref{eq2.3}-\eqref{eq2.2}.
\begin{theorem}\label{thm2.2}
The solution $u(x,t)$ to the initial-boundary value problem \eqref{eq2.3} \eqref{eq2.4} \eqref{eq2.5} and\\ \eqref{eq2.2} is stable with respect to the initial value $(\varphi,\psi)$ in the following sense: for any given $\varepsilon>0$, there exists $\eta>0$ depending only on $\varepsilon$ and $T$, such that if
\begin{equation}\label{eq2.63}
\begin{array}{l}
\left\|\psi_{1}-\psi_{2}\right\|_{L^{2}((0, l))}+\left\|\psi_{1}-\psi_{2}\right\|_{L^{2}((l,L))}\leq \eta, \\
\left\|\varphi_{1}-\varphi_{2}\right\|_{L^{2}((0, l))}+\left\|\varphi_{1}-\varphi_{2}\right\|_{L^{2}((l, L))}\leq \eta, \\
\left\|\varphi_{1_x}-\varphi_{2_{x}}\right\|_{L^{2}((0, l))}+\left\|\varphi_{1_x}-\varphi_{2_{x}}\right\|_{L^{2}((l, L))} \leq \eta,
\end{array}
\end{equation}
then for $0\leq t \leq T$, the difference between the solutions $u_1$ with initial values $(\varphi_1,\psi_1)$ and $u_2$ with initial values $(\varphi_2,\psi_2)$ satisfies
\begin{equation}\label{eq2.64}
\begin{array}{l}
\left\|u_{1}-u_{2}\right\|_{L^{2}((0, l))}+\left\|u_{1}-u_{2}\right\|_{L^{2}((l, L))}\leq \varepsilon, \\
\left\|u_{1x}-u_{2x}\right\|_{L^{2}((0, l))}+\left\|u_{1x}-u_{2x}\right\|_{L^{2}((l, L))}\leq \varepsilon, \\
\left\|u_{1t}-u_{2t}\right\|_{L^{2}((0, l))}+\left\|u_{1t}-u_{2t}\right\|_{L^{2}((l, L))} \leq \varepsilon.
\end{array}
\end{equation}
\end{theorem}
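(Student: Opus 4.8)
The plan is to run the energy method on the difference of the two solutions. Set $u=u_1-u_2$, $\varphi=\varphi_1-\varphi_2$, $\psi=\psi_1-\psi_2$. By linearity $u$ solves the homogeneous coupled system \eqref{eq2.59} together with the interface condition \eqref{eq2.2} and the Hooke condition \eqref{eq2.3} at $x=l$, but now with the (small) initial data $u(x,0)=\varphi(x)$, $u_t(x,0)=\psi(x)$. The computation \eqref{eq2.58} used only the wave equation and the transmission condition \eqref{eq2.3}, so it applies verbatim to $u$ and yields $E(t)\equiv E(0)$, where $E$ is the energy \eqref{eq2.57} built from $u$.

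Next I would estimate $E(0)$ in terms of the data norms appearing in \eqref{eq2.63}. Writing out
\[
E(0)=\|\psi\|_{L^2((0,L))}^2+a^2\|\varphi_x\|_{L^2((0,L))}^2+a^2\sigma\,\varphi(l)^2,
\]
the only term not already controlled by the hypotheses is the pointwise one. Since the homogeneous boundary condition gives $\varphi(0)=0$, I would write $\varphi(l)=\int_0^l\varphi_x(x)\,dx$ and apply Cauchy--Schwarz to obtain $\varphi(l)^2\le l\,\|\varphi_x\|_{L^2((0,l))}^2$. Hence $E(0)\le C_1\bigl(\|\psi\|_{L^2((0,L))}^2+\|\varphi_x\|_{L^2((0,L))}^2\bigr)$ with $C_1=C_1(a,\sigma,l)$, and under \eqref{eq2.63} this is at most $C_1'\eta^2$, where $C_1'$ depends only on the fixed parameters $a,\sigma,l,L$ (the quantities $\|f\|_{L^2((0,l))}+\|f\|_{L^2((l,L))}$ and $\|f\|_{L^2((0,L))}$ being comparable).

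Then I would read off the three bounds in \eqref{eq2.64}. Because every summand of $E$ is nonnegative and $E(t)=E(0)$ for all $t\in[0,T]$, we immediately get $\|u_t(\cdot,t)\|_{L^2((0,L))}^2\le E(0)$ and $a^2\|u_x(\cdot,t)\|_{L^2((0,L))}^2\le E(0)$, which already dominate the second and third lines of \eqref{eq2.64}. For the first line the energy is not directly enough, since it controls only the derivatives of $u$; so I would integrate in time, $u(x,t)=\varphi(x)+\int_0^t u_t(x,s)\,ds$, and use Minkowski's integral inequality to get $\|u(\cdot,t)\|_{L^2((0,L))}\le\|\varphi\|_{L^2((0,L))}+\int_0^t\|u_t(\cdot,s)\|_{L^2((0,L))}\,ds\le\|\varphi\|_{L^2((0,L))}+T\sqrt{E(0)}$. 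Collecting the three estimates and splitting each $L^2((0,L))$ norm back into the pieces on $(0,l)$ and $(l,L)$ at the cost of a factor $\sqrt2$ gives a bound of the form $C(T)\,\eta$ for all three left-hand sides of \eqref{eq2.64}, with $C(T)$ depending on $T$ and the fixed parameters only; choosing $\eta=\varepsilon/C(T)$ finishes the proof.

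The only genuinely delicate point is the pointwise term $a^2\sigma\,u(l,t)^2$: in the conservation law it works in our favour, being part of a conserved nonnegative quantity, and at $t=0$ it is harmless because the boundary condition forces $\varphi(0)=0$, so a one-line Cauchy--Schwarz bounds it by $\|\varphi_x\|_{L^2((0,l))}^2$. The other thing to keep in mind is that the whole argument presupposes the regularity used to derive \eqref{eq2.58} — in particular $u$ being $C^1$ in $t$ at $x=l$ — which is precisely the standing assumption under which Theorem \ref{thm2.1} was established, so no new hypothesis is needed.
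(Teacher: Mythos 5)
Your proposal is correct, and its engine is the same as the paper's: form the difference $w=u_1-u_2$, observe that it solves the homogeneous coupled problem, and invoke the conservation $E(t)\equiv E(0)$ of the energy \eqref{eq2.57} established in \eqref{eq2.58}. Where you diverge is in how the zeroth-order bound $\|u_1-u_2\|_{L^2}$ is extracted: the paper introduces the auxiliary quantity $E_0(t)=\|w\|_{L^2((0,l))}^2+\|w\|_{L^2((l,L))}^2$, derives the differential inequality $E_0'(t)\le E_0(t)+E(t)$, and closes it with a Gronwall argument to get $E_0(t)\le e^tE_0(0)+(e^t-1)E(0)$; you instead write $w(x,t)=\tilde\varphi(x)+\int_0^t w_t(x,s)\,\mathrm{d}s$ and apply Minkowski's integral inequality, yielding $\|w(\cdot,t)\|_{L^2}\le\|\tilde\varphi\|_{L^2}+T\sqrt{E(0)}$. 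The two routes give constants of comparable quality ($e^T$ versus $1+T$), but yours is more elementary and avoids the Gronwall machinery. Your treatment is also more complete in two respects: you explicitly control the interface term $a^2\sigma\,\tilde\varphi(l)^2$ in $E(0)$ via $\tilde\varphi(l)=\int_0^l\tilde\varphi_x\,\mathrm{d}x$ and Cauchy--Schwarz (using $\tilde\varphi(0)=0$), whereas the paper's final choice of $\eta$ absorbs this term with a factor $a^2\sigma L^{-1}$ whose provenance is not justified (Cauchy--Schwarz produces a factor $l$, not $L^{-1}$); and you verify all three conclusions of \eqref{eq2.64}, while the paper's written proof only records the bound \eqref{eq2.72} for the first line, leaving the $u_x$ and $u_t$ estimates (which do follow directly from energy conservation, as you note) implicit.
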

\begin{proof}
Let $w=u_{1}(x,t)-u_{2}(x,t)$. Then $w(x,t)$ satisfies
\begin{equation}\label{eq2.65}
\left\{\begin{array}{l}
w_{tt}=a^{2}w_{x x}, \quad 0<x<l, l<x<L,  t>0,\\
w(0, t)=0,  w_{x}(L,t)=0,\\
w(x, 0)=\tilde{\varphi },  w_{t}(x, 0)=\tilde{\psi},\\
\end{array}\right.
\end{equation}
where $\tilde{\varphi}=\varphi_{1}-\varphi_{2}, \tilde{\psi}=\psi_{1}-\psi_{2}$, and it also satisfies the conditions
\begin{center}
$w\left(l-0, t\right)=w\left(l+0, t\right)$ and $-w_{x}\left(l-0, t\right)+w_{x}\left(l+0, t\right)=\sigma w(l, t)$.
\end{center}
From \eqref{eq2.57} and $\eqref{eq2.58}$, we obtain
\begin{equation}\label{eq2.66}
E(t)=\int_{0}^{l} w_{t}^{2}(x,t)+a^{2} w_{x}^{2}(x,t) \mathrm{d} x+\int_{l}^{L} w_{t}^{2}(x,t)+a^{2} w_{x}^{2}(x,t) \mathrm{d} x+a^2\sigma w^{2}(l,t) \text{and} \frac{\mathrm{d}E(t)}{\mathrm{d}t}=0.
\end{equation}
Thus, $E(t)$ is a constant with respect to $t$, i.e. $E(t) \equiv E(0)$.
\par
Further, we can obtain an estimate for $L^2$ norm of the function $w(x,t)$. Let us denote
\begin{equation}\label{eq2.67}
\begin{aligned}
E_{0}(t)=&\int_{0}^{l} w^{2}(x, t) \mathrm{d} x+\int_{l}^{L} w^{2}(x, t) \mathrm{d} x \\
=&{\|w(\cdot,t)\|_{L^{2}((0, l))}^{2}}+{\|w(\cdot,t)\|_{L^{2}((l, L))}^{2}}.
\end{aligned}
\end{equation}
Taking the derivative with respect to $t$, we have
\begin{equation}\label{eq2.68}
\begin{aligned}
\frac{\mathrm{d} E_{0}(t)}{\mathrm{d} t}&=2 \int_{0}^{l} w w_{t} \mathrm{d} x+2 \int_{l}^{L} w w_{t} \mathrm{d} x \\
&\leq \int_{0}^{l} w^{2} \mathrm{d} x+\int_{0}^{l} w_{t}^{2} \mathrm{d} x +\int_{l}^{L} w^{2} \mathrm{d} x+\int_{l}^{L} w_{t}^{2} \mathrm{d} x\\
&\leq E_{0}(t)+E(t).
\end{aligned}
\end{equation}
Then, multiplying both sides of the equation by $\mathrm{e}^{-t}$ and using Gronwall's inequality, we have
\begin{equation}\label{eq2.69}
\frac{\mathrm{d}}{\mathrm{d} t}\left(\mathrm{e}^{-t} E_{0}(t)\right) \leqslant \mathrm{e}^{-t} E(t).
\end{equation}
Integrating both sides of the equation from $0$ to $t$, we also obtain
\begin{equation}\label{eq2.70}
\begin{aligned}
E_{0}(t) & \leqslant \mathrm{e}^{t} E_{0}(0) + \int_{0}^{t} \mathrm{e}^{t-\tau}E(\tau) \mathrm{d} \tau \\
& = \mathrm{e}^{t}E_{0}(0) + (\mathrm{e}^{t}-1)E(0),
\end{aligned}
\end{equation}
where
\begin{center}
$E(0)=\|\tilde\psi\|_{L^{2}((0, l))}^{2}+\|\tilde\psi\|_{L^{2}((l, L))}^{2}+a^{2}\left\|\tilde\varphi_{x}\right\|_{L^{2}((0, l))}^{2}+a^{2}\left\|\tilde\varphi_{x}\right\|_{L^{2}((l, L))}^{2}+a^{2} \sigma \tilde\varphi^{2}(l)$.
\end{center}
Then we have
\begin{equation}\label{eq2.71}
{\|w(\cdot,t)\|_{L^{2}((0, l))}^{2}}+{\|w(\cdot,t)\|_{L^{2}((l,L))}^{2}}=E_{0}(t) \leq \mathrm{e}^{t}E_{0}(0)+(\mathrm{e}^{t}-1)E(0),
\end{equation}
where
\begin{center}
$E_{0}(0)=\|\tilde\varphi(x)\|_{L{((0, l))}^{2}}^{2}+\|\tilde\varphi(x)\|_{L{((l, L))}^{2}}^{2}$.
\end{center}
So when \eqref{eq2.63} holds, taking $\eta=\varepsilon /\sqrt{\mathrm e^t(2+a^2+a^2\sigma L^{-1})-(1+a^2+a^2\sigma L^{-1})}$, we have
\begin{equation}\label{eq2.72}
\|w\|_{L^{2}((0,l))}^{2}+\|w\|_{L^{2}((l,L))}^{2}\leq\varepsilon^2.
\end{equation}
This yields the desired conclusion.
\end{proof}

\subsection{Existence of classical solution}\label{subsec2.2}
Afterward, by further observing the boundary condition \eqref{eq2.3} derived from the force analysis, we can arrive at the following conclusions as one of our main results Theorem \ref{thm2.3}.
\begin{theorem}\label{thm2.3}
Given that $u(l-0,t)=u(l+0,t)$, we may denote them as $h(t)$ being an undetermined function dependent on $t$. For the initial-boundary value problem \eqref{eq2.3} \eqref{eq2.4} \eqref{eq2.5} and\\\eqref{eq2.2}, we have\\
(1) If $u\in C^1([0,L]\times(0,T))$, then $-u_x(l-0,t)+u_x(l+0,t)=0=\sigma h(t)$ and $h(t)\equiv0$;\\
(2) The necessary condition for the coupled problem \eqref{eq2.3}-\eqref{eq2.2} to have $u\in C^1([0,L]\times(0,T))$ is that $\varphi(x)\in C^2([0,L])$ and $\varphi(l)=0$. And thus the equations \eqref{eq2.3}-\eqref{eq2.2} does not admit a global $C^2$ solution $u$ on $[0,L]$ for general $\varphi(x)\in C^2([0,L])$.
\end{theorem}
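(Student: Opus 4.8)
The plan is to treat part (1) as the core fact and to obtain part (2) from it together with elementary regularity considerations for the wave equation restricted to the initial time. The guiding observation is that a function $u$ that is $C^1$ on $[0,L]\times(0,T)$ has an $x$-derivative which cannot jump across the interface $x=l$; since the coupling condition \eqref{eq2.3} prescribes exactly that jump to equal $\sigma u(l,t)$ and $\sigma=\mathsf{K}/T>0$, the displacement at the support must vanish identically.

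For part (1), I would first note that $C^1$ regularity on $[0,L]\times(0,T)$ forces the one-sided limits of $u_x$ to coincide, $u_x(l-0,t)=u_x(l+0,t)$ for every $t\in(0,T)$, their common value being $u_x(l,t)$ (the analogous equality for $u$ itself being the continuity condition \eqref{eq2.2}). Substituting into \eqref{eq2.3} gives $-u_x(l,t)+u_x(l,t)=0=\sigma h(t)$ on $(0,T)$, and dividing by $\sigma>0$ yields $h(t)\equiv 0$; in particular the left and right fluxes agree and both the jump in $u_x$ and $\sigma h(t)$ vanish, which is precisely the assertion of (1).

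For part (2) I would proceed in three steps. \emph{(i)} Passing $t\to 0^+$ in $u(l,t)=h(t)\equiv 0$ and using the initial condition $u(\cdot,0)=\varphi$ gives $\varphi(l)=0$, so $\varphi(l)=0$ is necessary. \emph{(ii)} For $\varphi\in C^2([0,L])$: since $u$ solves $u_{tt}-a^2u_{xx}=0$ classically on each closed strip $[0,l]\times[0,T]$ and $[l,L]\times[0,T]$, restricting to $t=0$ shows $\varphi\in C^2([0,l])$ and $\varphi\in C^2([l,L])$; the assumed $C^1$ matching of $u$ at $x=l$ gives $\varphi'(l-0)=\varphi'(l+0)$, while differentiating $h\equiv 0$ twice in $t$ and then using $u_{tt}=a^2u_{xx}$ at $x=l\mp 0$, $t=0$ gives $\varphi''(l-0)=\varphi''(l+0)=0$; together these yield $\varphi\in C^2([0,L])$. \emph{(iii)} The non-existence of a global $C^2$ solution is then immediate: any such solution is a fortiori $C^1$ on $[0,L]\times(0,T)$, so part (1) would force $\varphi(l)=0$, which a generic $\varphi\in C^2([0,L])$ (e.g. any one with $\varphi(l)\neq 0$) violates; hence the coupled problem has no global $C^2$ solution for general admissible initial data.

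The step I expect to be the genuine obstacle is \emph{(ii)}: making rigorous that the regularity of $u$ up to $t=0$ and across $x=l$ really propagates to $\varphi$ requires some care with the wave-equation compatibility conditions on a bounded interval, and the cleanest route is to use $h\equiv 0$ from part (1) to reduce each subinterval to a pure Dirichlet problem and then to invoke the d'Alembert representation with odd periodic extensions of $\varphi$ and $\psi$. By contrast, part (1) and the non-existence statement (iii) are essentially immediate consequences of the interface condition \eqref{eq2.3} and the positivity of $\sigma$, so the write-up should be arranged so that this simple mechanism is not obscured by the bookkeeping in (ii).
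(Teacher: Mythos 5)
Your proposal is correct, and for part (1) and the necessity of $\varphi(l)=0$ it coincides with the paper's (very terse) reasoning: global $C^1$ regularity kills the jump of $u_x$ across $x=l$ in \eqref{eq2.3}, and $\sigma>0$ then forces $h\equiv 0$, hence $\varphi(l)=h(0)=0$. Where you genuinely diverge is the non-existence claim. You dispose of it using only the $t=0$ slice: a global $C^2$ solution is a fortiori $C^1$, so $h\equiv 0$ and $\varphi(l)=0$, which a generic $\varphi\in C^2([0,L])$ violates. The paper instead argues by contradiction that $h\equiv 0$ reduces \eqref{eq2.3}--\eqref{eq2.2} to the classical two-fixed-end string problem \emph{together with} the extra requirement that $x=l$ be a permanent node, $u(l,t)\equiv 0$ for all $t$, and exhibits $\varphi(x)=\sin x$ on $(0,2\pi)$, whose solution $\sin x\cos(at)$ has no interior node other than $x=\pi$. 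Your route is shorter and in fact already excludes global $C^1$ solutions; the paper's route carries more information, namely that the obstruction does not vanish even when the necessary condition $\varphi(l)=0$ is imposed, since $u(l,t)=0$ for all $t$ amounts to infinitely many constraints on the excited Fourier modes. Two cautions: your step (ii) (necessity of $\varphi\in C^2([0,L])$) is asserted but not actually proved in the paper, and your sketch of it tacitly assumes that $u$ is piecewise $C^2$ up to the corner points $(l,0)$ so that $a^2\varphi''(l\mp 0)=h''(0)=0$; you should make that hypothesis explicit, since a solution that is merely $C^1$ need not have $C^2$ initial data.
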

\begin{proof}
\par
(1) and (2) can be concluded successively given that $\sigma$ is a positive constant.
\par
(3) Supposing that there exists a global $C^2$ solution $u$ to \eqref{eq2.3}-\eqref{eq2.2} on $[0,L]$, then we should have $h(t)\equiv 0$ holding true due to \eqref{eq2.3}. Then, the problem \eqref{eq2.3}-\eqref{eq2.2} turns into the most typical string vibration equation problem with two fixed ends and no external force terms. It is well known that there is a global $C^2$ solution $u$ for this problem with $\varphi(x)\in C^3, \psi(x)\in C^2$, $\varphi(0)=\varphi(l)=\varphi''(0)=\varphi''(l)=0$, and $\psi(0)=\psi(l)=0$. However, we are not able to find a fixed point within the string that remains constantly zero, such as the initial-boundary value problem
\begin{equation*}
\begin{array}{l}
\left\{\begin{array}{l}
u_{tt}-a^{2} u_{x x}=0,  0<x<2\pi,  t>0,\\
u(0, t)=0,  u(2\pi, t)=0,\\
u(x, 0)=\sin x,  u_{x}(x, 0)=0.\\
\end{array}\right. \\
\end{array}\\
\end{equation*}
We find that there is no fixed point when $l \ne\pi$, which leads to $h(t)\not\equiv0$. For the sake of contradiction, we have completed the proof.
\end{proof}
Through this theorem, we have mathematically proven a major conclusion: there is an essential difference depending on whether internal elastic supports are placed within the string. As the previous theoretical findings are insufficient, taking further exploration of the problem \eqref{eq2.3}-\eqref{eq2.2} is necessary. Additionally, due to the absence of a global $C^2$ solution for the problem \eqref{eq2.3}-\eqref{eq2.2}, this problem at most has a solution that is piecewise $C^2$ and global $C^1$.
\par
We note that \eqref{eq2.4} and \eqref{eq2.5} have the same form. We might as well start from solving \eqref{eq2.5}. For the initial-boundary value problem \eqref{eq2.5}, this problem admits nonhomogeneous boundary conditions. Meanwhile, we can turn the boundary conditions into a homogeneous case through an appropriate transform of the unknown function.
Let
\begin{equation}\label{eq2.19}
U(x, t)=\frac{L-x}{L-l} h(t).
\end{equation}
which is a function satisfying the boundary conditions. Then, we introduce a new unknown function
\begin{equation}\label{eq2.20}
V(x, t)=u(x, t)-U(x, t)=u(x, t)-\frac{L-x}{L-l} h(t),
\end{equation}
which solves the equation
\begin{equation}\label{eq2.21}
V_{t t}-a^{2} V_{x x}=-\frac{L-x}{L-l} h^{\prime \prime}(t)
\end{equation}
with the following nonhomogeneous initial conditions,
\begin{equation}\label{eq2.22}
V(x,0) =\varphi(x)-U(x, 0)=\varphi(x)-\frac{L-x}{L-l} h(0),
\end{equation}
\begin{equation}\label{eq2.23}
V_{t}(x,0) =\psi(x)-U_{t}(x, 0) =\psi(x)-\frac{L-x}{L-l} h^{\prime}(0),
\end{equation}
and homogeneous boundary conditions. Thus the initial-boundary value problem \eqref{eq2.5} turns into
\begin{equation}\label{eq2.24}
\left\{\begin{array}{l}
V_{t t}-a^{2} V_{x x}=g(x,t),  l<x<L,  t>0,\\
V(L, t)=0,  V(l, t)=0,\\
V(x, 0)=\varphi_2(x),\\
V_{t}(x, 0)=\psi_2(x),
\end{array}\right.
\end{equation}
where $$g(x,t)=-\frac{L-x}{L-l} h^{\prime \prime}(t),$$ $$\varphi_2(x)=\varphi(x)-\frac{L-x}{L-l} h(0)$$ and $$\psi_2(x)=\psi(x)-\frac{L-x}{L-l} h^{\prime}(0).$$ Suppose that
\begin{equation}\label{eq2.42}
\varphi_{2}(x)\in C^3, \psi_{2}(x)\in C^2
\end{equation}
and
\begin{equation}\label{eq2.43}
\varphi_{2}(L)=\varphi_{2}(l)=\varphi_{2}''(L)=\varphi_{2}''(l)=\psi_{2}(L)=\psi_{2}(l)=0.
\end{equation}
With the principle of superposition, the above initial boundary value problem \eqref{eq2.24} can be decomposed into the following coupled initial-boundary value problems
\begin{equation}\label{eq2.25}
\begin{array}{l}
\left\{\begin{array}{l}
V_{3tt}-a^{2} V_{3x x}=0,  l<x<L,  t>0,\\
V_{3}(L, t)=0,  V_{3}(l, t)=0,\\
V_{3}(x, 0)=\varphi_{2}(x),  V_{3t}(x, 0)=\psi_{2}(x),\\
\end{array}\right. \\
\end{array}\\
\end{equation}
together with
\begin{equation}\label{eq2.26}
\begin{array}{l}
\left\{\begin{array}{l}
V_{4tt}-a^{2} V_{4x x}=g(x,t),  l<x<L,  t>0,\\
V_{4}(L, t)=0,  V_{4}(l, t)=0,\\
V_{4}(x, 0)=0,  V_{4t}(x, 0)=0,\\
\end{array}\right. \\
\end{array}\\
\end{equation}
and clearly $$V=V_3+V_4.$$
\par
First of all, for the boundary value problem \eqref{eq2.25}, we make a variable substitution by letting $y=x-l$, then we have
\begin{equation}\label{eq2.27}
\begin{array}{l}
\left\{\begin{array}{l}
V_{3tt}-a^{2} V_{3yy}=0,  0<y<L-l,  t>0,\\
V_{3}(L-l, t)=0,  V_{3}(0, t)=0,\\
V_{3}(y, 0)=\varphi_2(y+l),  V_{3t}(y, 0)=\psi_2(y+l).\\
\end{array}\right. \\
\end{array}\\
\end{equation}
Thus, solving the initial-boundary value problem \eqref{eq2.25} turns into solving the initial-boundary value problem \eqref{eq2.27}. Now, we seek a non-trivial particular solution of equation \eqref{eq2.27} that allows for separation of variables
\begin{equation}\label{eq2.28}
V_3(y,t)=Y(y)T(t).
\end{equation}
Additionally, it must satisfy homogeneous boundary conditions. Here, $Y(y)$ and $T(t)$ respectively represent undetermined functions depending only on $y$ and $t$.
\par
Substituting \eqref{eq2.28} into $V_{3tt} = a^2V_{3yy}$, we obtain
\begin{equation}\label{eq2.29}
\frac{T''(t)}{a^2T(t)}=\frac{Y''(y)}{Y(y)}=-\lambda.
\end{equation}
In equation \eqref{eq2.27}, we obtain
\begin{equation}\label{eq2.30}
T''(t)+\lambda a^2T(t)=0,
\end{equation}
and
\begin{equation}\label{eq2.31}
Y''(y)+\lambda Y(y)=0.
\end{equation}
\par
Thus, the equation $V_{3tt} = a^2V_{3yy}$ is separated into two ordinary differential equations, with one involving only the independent variable $t$ and the other involving only the independent variable $y$. By solving these two equations, we can determine $T(t)$ and $Y(y)$, thereby obtaining a particular solution \eqref{eq2.28} for the equation \eqref{eq2.27}.
\par
To ensure that this solution is a nontrivial one that meets the homogeneous boundary conditions, we must find a nontrivial solution that satisfies the boundary conditions for equation \eqref{eq2.31}:
\begin{equation}\label{eq2.32}
Y(0)=0,   Y(L-l)=0.
\end{equation}
The general solution of equation \eqref{eq2.31} varies from $\lambda>0$, $\lambda=0$ to $\lambda<0$. We can easily know that when $\lambda=0$ or $\lambda<0$, $Y(y)$ must be constantly zero which means a nontrivial solution cannot be obtained.
\par
However, when $\lambda>0$, the general solution of equation \eqref{eq2.31} can be written as
\begin{equation}\label{eq2.33}
Y(y)=C_1\mathrm{cos}\sqrt{\lambda}y+C_2\mathrm{sin}\sqrt{\lambda}y.
\end{equation}
From the boundary condition $Y(0)=0$, we know that $C_1=0$. Then, from the second boundary condition of \eqref{eq2.32}, we obtain
\begin{equation}\label{eq2.34}
C_{2}\sin \sqrt{\lambda}(L-l)=0.
\end{equation}
To make $Y(y)$ a nontrivial solution, $\lambda$ should satisfy
\begin{equation}\label{eq2.35}
\sin \sqrt{\lambda}(L-l)=0.
\end{equation}
Therefore, for the problem \eqref{eq2.31} and \eqref{eq2.32}, we have
\begin{equation}\label{eq2.36}
\lambda_k=\lambda=\frac{k^2\pi^2}{(L-l)^2}  (k=1,2,...)
\end{equation}
and
\begin{equation}\label{eq2.37}
Y_k=C_k\mathrm{sin}\sqrt{\lambda_k}y=C_{k} \sin \frac{k \pi}{L-l} y  (k=1,2,...).
\end{equation}
Substituting $\lambda=\lambda_k$ into equation \eqref{eq2.30}, we obtain
\begin{equation}\label{eq2.38}
T_k(t)=A_k\mathrm{cos}\frac{k\pi a}{L-l}t+B_k\mathrm{sin}\frac{k\pi a}{L-l}t  (k=1,2,...),
\end{equation}
where $A_k$ and $B_k$ are arbitrary constants.
\par
Thus, we have the following separated variable solutions satisfying the homogeneous boundary conditions for equation \eqref{eq2.27}. We take an appropriate linear combination of these particular solutions to obtain the solution to the initial-boundary value problem
\begin{equation}\label{eq2.39}
V_{3}(y, t)=\sum_{k=1}^{\infty}\left(A_{k} \cos \frac{k \pi a}{L-l} t+B_{k} \sin \frac{k \pi a}{L-l} t\right) \sin \frac{k \pi}{L-l} y.
\end{equation}
In other words, we still need to determine the constants $A_k$ and $B_k$.
\par
Next, let us specify the constants $A_k$. According to the initial condition, in order to take the initial value $\varphi_{2}(y+l)$ for $V_3(y,t)$ when $t=0$, the following condition should hold
\begin{equation}\label{eq2.40}
\varphi_{2}(y+l)=\sum_{k=1}^{\infty} A_{k} \sin \frac{k \pi}{L-l} y.
\end{equation}
We multiply both sides of equation \eqref{eq2.40} by $\mathrm{sin}\frac{k\pi}{L-l}y$, then integrate the result from $0$ to $L-l$. This yields
\begin{equation}\label{eq2.41}
A_{k}=\frac{2}{L-l} \int_{0}^{L-l} \varphi_{2}(\xi+l) \sin \frac{k \pi}{L-l} \xi \mathrm{d} \xi.
\end{equation}
\par
To determine the coefficients $B_k$, we can differentiate term by term in equation \eqref{eq2.39},
\begin{equation}\label{eq2.44}
\begin{aligned}
V_{3t}(y, t)=\sum_{k=1}^{\infty} & \left(A_{k}(-\sin \frac{k \pi a}{L-l} t)\frac{k \pi a}{L-l}\right.\left.+B_{k}(\cos \frac{k \pi a}{L-l} t )\frac{k \pi a}{L-l}\right) \cdot \sin \frac{k \pi}{L-l} y.
\end{aligned}
\end{equation}
Thus, according to the initial condition $\eqref{eq2.25}_3$, it should hold that
\begin{equation}\label{eq2.45}
\psi_{2}(y+l)=\sum_{k=1}^{\infty} B_{k}\frac{k \pi a}{L-l} \sin \frac{k \pi}{L-l} y.
\end{equation}
Similarly, we can obtain
\begin{equation}\label{eq2.46}
B_{k}=\frac{2}{k \pi a} \int_{0}^{L-l} \psi_{2}(\xi+l) \sin \frac{k \pi}{L-l} \xi \mathrm{d} \xi.
\end{equation}
Substituting $A_k$ and $B_k$ into equation \eqref{eq2.39}, we obtain the formal solution to the initial-boundary value problem \eqref{eq2.27}, leading to the formal solution to \eqref{eq2.25}
\begin{equation}\label{eq2.47}
\begin{aligned}
V_{3}(x, t)
&=\sum_{k=1}^{\infty}(\frac{2}{L-l} \int_{l}^{L}(\varphi(\eta)-\frac{L-\eta}{L-l} h(0))\sin \frac{k \pi}{L-l}(\eta-l) \mathrm{d}\eta\cdot
\cos \frac{k \pi a}{L-l} t\\
&+\frac{2}{k \pi a} \int_{l}^{L}(\psi(\eta)-\frac{L-\eta}{L-l} h^{\prime}(0))\sin \frac{k \pi}{L-l}(\eta-l) \mathrm{d} \eta\cdot \sin \frac{k \pi a}{L-l} t)\cdot \sin \frac{k \pi}{L-l}(x-l).\\
\end{aligned}
\end{equation}
\par
Next, for the initial-boundary value problem \eqref{eq2.26}, by substituting $y=x-l$, this boundary value problem turns into
\begin{equation}\label{eq2.48}
\begin{array}{l}
\left\{\begin{array}{l}
V_{4tt}-a^{2} V_{4yy}=g(y+l,t),  0<y<L-l,  t>0,\\
V_{4}(L-l, t)=0,  V_{4}(0, t)=0,\\
V_{4}(y, 0)=0,  V_{4t}(y, 0)=0.\\
\end{array}\right. \\
\end{array}\\
\end{equation}
According to the principle of homogenization, if $W(y,t;\tau)$ is a solution to the initial-boundary value problem of the homogeneous equation
\begin{equation}\label{eq2.49}
\begin{array}{l}
\left\{\begin{array}{l}
W_{tt}-a^{2} W_{yy}=0,  t>\tau,\\
W(L-l, t)=0,  W(0, t)=0,\\
W(y, \tau)=0,  W_{t}(y, \tau)=g(y+l,\tau),\\
\end{array}\right. \\
\end{array}\\
\end{equation}
then
\begin{equation}\label{eq2.50}
V_{4}(y,t)=\int_{0}^{t} w(y, t ; \tau) \mathrm d \tau
\end{equation}
is a solution to the initial-boundary value problem \eqref{eq2.48}. Let $t^{\prime}=t-\tau$. Then \eqref{eq2.49} becomes
\begin{equation}\label{eq2.51}
\begin{array}{l}
\left\{\begin{array}{l}
W_{tt}-a^{2} W_{yy}=0,  t^{\prime}>0,\\
W(L-l, t^{\prime})=0,  W(0, t^{\prime})=0,\\
W(y, 0)=0,  W_{t^{\prime}}(y, 0)=g(y+l,\tau).\\
\end{array}\right. \\
\end{array}\\
\end{equation}
Because both the equation and the boundary conditions are homogeneous, we have
\begin{equation}\label{eq2.52}
\begin{aligned}
W\left(y, t ; \tau\right)
&= \sum_{k=1}^{\infty} B_{k}(\tau) \sin \frac{k \pi a}{L-l} t^{\prime} \sin \frac{k \pi}{L-l} y \\
&=\sum_{k=1}^{\infty} \left(\frac{2}{k \pi a} \int_{0}^{L-l} g(\xi+l, \tau) \sin \frac{k \pi}{L-l} \xi\mathrm{d} \xi\right) \cdot \sin \frac{k \pi a}{L-l}(t-\tau)\sin \frac{k \pi}{L-l} y.
\end{aligned}
\end{equation}
So, we have
\begin{equation}\label{eq2.53}
\begin{aligned}
V_{4}(x, t)
=&\sum_{k=1}^{\infty} (\int_{0}^{t}\frac{2}{k \pi a} \int_{l}^{L}(-\frac{L-\eta}{L-l} h^{\prime \prime}(\tau))\sin \frac{k \pi}{L-l}(\eta-l) \mathrm{d} \eta \\
&\sin \frac{k \pi a}{L-l}(t-\tau) \mathrm{d} \tau)\cdot \sin \frac{k \pi}{L-l}(x-l).
\end{aligned}
\end{equation}
Due to $V=V_3+V_4$ and \eqref{eq2.20}, it can be obtained that
\begin{equation}\label{eq2.54}
\begin{aligned}
u(x, t)=&\sum_{k=1}^{\infty}(\frac{2}{L-l} \int_{l}^{L}(\varphi(\eta)-\frac{L-\eta}{L-l} h(0)) \sin \frac{k \pi}{L-l}(\eta-l)\mathrm{d} \eta\cdot\cos \frac{k \pi a}{L-l} t\\
&                   +\frac{2}{k \pi a} \int_{l}^{L}(\psi(\eta)-\frac{L-\eta}{L-l} h^{\prime}(0))\sin \frac{k \pi}{L-l}(\eta-l)\mathrm{d}\eta \cdot \sin \frac{k \pi a}{L-l} t)\\
&                    \cdot \sin \frac{k \pi}{L-l}(x-l)\\
&+\sum_{k=1}^{\infty}(\int_{0}^{t} \frac{2}{k \pi a} \int_{l}^{L}(-\frac{L-\eta}{L-l} h^{\prime \prime}(\tau)) \sin \frac{k \pi}{L-l} (\eta-l) \mathrm{d} \eta \cdot \sin \frac{k \pi a}{L-l}(t-\tau) \mathrm{d} \tau)\\
&                    \cdot\sin \frac{k \pi}{L-l}(x-l)+\frac{L-x}{L-l} h(t).
\end{aligned}
\end{equation}
\par
When it comes to the initial-boundary problem \eqref{eq2.4}, by simply repeating the above steps, \eqref{eq2.4} turns into
\begin{equation}\label{eq2.11}
\begin{array}{l}
\left\{\begin{array}{l}
V_{tt}-a^{2} V_{x x}=f(x,t),  0<x<l,  t>0,\\
V(0, t)=0,  V(l, t)=0,\\
V(x, 0)=\varphi_{1}(x),  V_{t}(x, 0)=\psi_{1}(x),\\
\end{array}\right. \\
\end{array}\\
\end{equation}
in which
\begin{center}
$f(x,t)=-\frac{x}{l}h''(t)$,
\end{center}
\begin{center}
$\varphi_{1}(x)=\varphi(x)-\frac{x}{l}h(0)$
\end{center}
and
\begin{center}
$\psi_{1}(x)=\psi(x)-\frac{x}{l}h'(0)$.
\end{center}
We suppose that
\begin{equation}\label{eq2.12}
\varphi_{1}(x)\in C^3, \psi_{1}(x)\in C^2
\end{equation}
and
\begin{equation}\label{eq2.13}
\varphi_{1}(0)=\varphi_{1}(l)=\varphi_{1}''(0)=\varphi_{1}''(l)=\psi_{1}(0)=\psi_{1}(l)=0.
\end{equation}
Ultimately, it can be obtained that
\begin{equation}\label{eq2.18}
\begin{aligned}
u(x, t)&=\sum_{k=1}^{\infty}(\frac{2}{l} \int_{0}^{l}(\varphi(\xi)-\frac{\xi}{l} h(0)) \sin \frac{k \pi}{l} \xi \mathrm{d} \xi \cdot\cos \frac{k \pi a}{l} t\\
&                               +\frac{2}{k \pi a} \int_{0}^{l}(\psi(\xi)-\frac{\xi}{l} h^{\prime}(0))\sin \frac{k \pi}{l} \xi \mathrm{d}\xi\cdot \sin \frac{k \pi a}{l} t)\cdot \sin \frac{k \pi}{l}x\\
&+\sum_{k=1}^{\infty}(\int_{0}^{t} \frac{2}{k \pi a} \int_{0}^{l}(-\frac{\xi}{l} h^{\prime \prime}(\tau)) \sin \frac{k \pi}{l} \xi \mathrm{d} \xi \sin \frac{k \pi a}{l}(t-\tau) \mathrm{d} \tau)\cdot\sin \frac{k \pi}{l}x\\
&                               +\frac{x}{l} h(t).
\end{aligned}
\end{equation}
Meanwhile, the function $u$ given by \eqref{eq2.18} and \eqref{eq2.54} also need to satisfy the boundary condition \eqref{eq2.3}, so we have
\begin{equation}\label{eq2.55}
\begin{array}{l}
u(x,t)=\left\{\begin{array}{ll}
&\sum\limits_{k=1}^{\infty}(\frac{2}{l} \int_{0}^{l}(\varphi(\xi)-\frac{\xi}{l} h(0)) \sin \frac{k \pi}{l} \xi \mathrm{d} \xi \cdot\cos \frac{k \pi a}{l} t\\
&                 +\frac{2}{k \pi a} \int_{0}^{l}(\psi(\xi)-\frac{\xi}{l} h^{\prime}(0))\sin \frac{k \pi}{l} \xi \mathrm{d}\xi\cdot \sin \frac{k \pi a}{l} t)\cdot \sin \frac{k \pi}{l}x\\
&+\sum\limits_{k=1}^{\infty}(\int_{0}^{t} \frac{2}{k \pi a} \int_{0}^{l}(-\frac{\xi}{l} h^{\prime \prime}(\tau)) \sin \frac{k \pi}{l} \xi \mathrm{d} \xi \sin \frac{k \pi a}{l}(t-\tau) \mathrm{d} \tau)\cdot\sin \frac{k \pi}{l}x\\
&                 +\frac{x}{l} h(t), \text{if} 0<x<l,\\
&\sum\limits_{k=1}^{\infty}(\frac{2}{L-l} \int_{l}^{L}(\varphi(\eta)-\frac{L-\eta}{L-l} h(0)) \sin \frac{k \pi}{L-l}(\eta-l)\mathrm{d} \eta\cdot \cos \frac{k \pi a}{L-l} t\\
&                 +\frac{2}{k \pi a} \int_{l}^{L}(\psi(\eta)-\frac{L-\eta}{L-l} h^{\prime}(0))\sin \frac{k \pi}{L-l}(\eta-l)\mathrm{d}\eta \cdot \sin \frac{k \pi a}{L-l} t)\cdot \sin \frac{k \pi}{L-l}(x-l)\\
&+\sum\limits_{k=1}^{\infty}(\int_{0}^{t} \frac{2}{k \pi a} \int_{l}^{L}(-\frac{L-\eta}{L-l} h^{\prime \prime}(\tau)) \sin \frac{k \pi}{L-l} (\eta-l) \mathrm{d} \eta \sin \frac{k \pi a}{L-l}(t-\tau) \mathrm{d} \tau)\cdot\sin \frac{k \pi}{L-l}(x-l)\\
&                 +\frac{L-x}{L-l} h(t), \text{if} l<x<L,\\
\end{array}\right. \\
\end{array}\\
\end{equation}
in which $h(t)$ is determined by
\begin{equation}\label{eq2.56}
\begin{aligned}
&-\sum_{k=1}^{\infty}(\frac{2}{l} \int_{0}^{l}(\varphi(\xi)-\frac{\xi}{l} h(0))\sin \frac{k \pi}{l} \xi \mathrm{d} \xi \cdot\cos \frac{k \pi a}{l} t
+\frac{2}{k \pi a} \int_{0}^{l}(\psi(\xi)\\
&                     -\frac{\xi}{l} h^{\prime}(0))\sin \frac{k \pi}{l} \xi \mathrm{d}\xi\cdot\sin \frac{k \pi a}{l} t)\cdot(-1)^{k}\cdot \frac{k \pi}{l}\\
&-\sum_{k=1}^{\infty}(\int_{0}^{t} \frac{2}{k \pi a} \int_{0}^{l}(-\frac{\xi}{l} h^{\prime \prime}(\tau)) \sin \frac{k \pi}{l} \xi \mathrm{d} \xi \sin \frac{k \pi a}{l}(t-\tau) \mathrm{d} \tau)\cdot(-1)^{k}\cdot\frac{k \pi}{l}-\frac{1}{l} h(t)\\
&+\sum_{k=1}^{\infty}(\frac{2}{L-l} \int_{l}^{L}(\varphi(\eta)-\frac{L-\eta}{L-l} h(0)) \sin \frac{k \pi}{L-l}(\eta-l)\mathrm{d} \eta \cdot \cos \frac{k \pi a}{L-l}t\\
&                     +\frac{2}{k \pi a} \int_{l}^{L}(\psi(\eta)-\frac{L-\eta}{L-l} h^{\prime}(0))\sin \frac{k \pi}{L-l}(\eta-l)\mathrm{d}\eta \cdot \sin \frac{k \pi a}{L-l} t)\cdot\frac{k \pi}{L-l}\\
&+\sum_{k=1}^{\infty}(\int_{0}^{t} \frac{2}{k \pi a} \int_{l}^{L}(-\frac{L-\eta}{L-l} h^{\prime \prime}(\tau)) \sin \frac{k \pi}{L-l} (\eta-l) \mathrm{d} \eta \sin \frac{k \pi a}{L-l}(t-\tau) \mathrm{d} \tau)\\
&                      \cdot\frac{k \pi}{L-l}-\frac{1}{L-l} h(t)
=\sigma h(t).
\end{aligned}
\end{equation}
So far, we have obtained the formal solution of equations \eqref{eq2.3}-\eqref{eq2.2}.
\begin{theorem}\label{thm2.4}
If $\varphi(x)\in C^3,\psi(x)\in C^2$ and $\varphi(0)=\varphi(l)=\varphi(L)=0, \varphi''(0)=\varphi''(l)=\varphi''(L)=0, \psi(0)=\psi(l)=\psi(L)=0$, then the problem \eqref{eq2.3}-\eqref{eq2.2} has a formal analytical solution \eqref{eq2.55}, where $h(t)$ satisfies \eqref{eq2.56}.
\end{theorem}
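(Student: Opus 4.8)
The plan is to assemble the representation \eqref{eq2.55} segment by segment, following exactly the construction carried out above: homogenize the boundary data on $[0,l]$ and on $[l,L]$ by peeling off the linear-in-$x$ profiles $\frac{x}{l}h(t)$ and $\frac{L-x}{L-l}h(t)$, solve each resulting problem by separation of variables together with Duhamel's principle, and then close the coupled system by imposing the force-balance relation \eqref{eq2.3} at the interface, which produces the equation \eqref{eq2.56} for the interface displacement $h(t)$. Thus the theorem mainly organizes the construction already performed; the real content is to check that the hypotheses on $\varphi,\psi$ are precisely what is needed to legitimize each step.

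First I would record the role of the hypotheses. Setting $u(l-0,t)=u(l+0,t)=:h(t)$ and introducing $V$ as in \eqref{eq2.20} and its analogue on $[0,l]$, problems \eqref{eq2.4}, \eqref{eq2.5} reduce to \eqref{eq2.11}, \eqref{eq2.24}. Unwinding the definitions of $\varphi_1,\psi_1,\varphi_2,\psi_2$, the compatibility conditions \eqref{eq2.13} and \eqref{eq2.43} become
\[
h(0)=\varphi(l),\quad h'(0)=\psi(l),\quad \varphi(0)=\varphi(L)=0,
\quad \varphi''(0)=\varphi''(l)=\varphi''(L)=0,\quad \psi(0)=\psi(L)=0,
\]
together with $\varphi\in C^3$, $\psi\in C^2$. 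Under the hypotheses of the theorem all of these hold once we take $h(0)=\varphi(l)=0$ and $h'(0)=\psi(l)=0$; in particular the hypotheses simultaneously make the separation-of-variables solution on each segment classical and pin down the initial data for $h$. With \eqref{eq2.12}, \eqref{eq2.42} satisfied, the Sturm--Liouville expansion \eqref{eq2.39} (and its counterpart on $[0,l]$) solves the homogeneous parts \eqref{eq2.25}, and Duhamel's formula \eqref{eq2.50} together with \eqref{eq2.52} handles the $h''$-forcing in \eqref{eq2.26}; superposing yields \eqref{eq2.18} and \eqref{eq2.54}.

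Next I would impose the coupling. The continuity condition \eqref{eq2.2} is automatic, since by construction both one-sided limits at $x=l$ equal $h(t)$. It then remains only to enforce \eqref{eq2.3}: differentiating the $x$-derivative of \eqref{eq2.18} at $x=l$ from the left and of \eqref{eq2.54} at $x=l$ from the right (using $\tfrac{d}{dx}\sin\tfrac{k\pi x}{l}\big|_{x=l}=(-1)^k\tfrac{k\pi}{l}$ and $\tfrac{d}{dx}\sin\tfrac{k\pi(x-l)}{L-l}\big|_{x=l}=\tfrac{k\pi}{L-l}$), the jump relation $-u_x(l-0,t)+u_x(l+0,t)=\sigma h(t)$ becomes exactly \eqref{eq2.56}. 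Hence \eqref{eq2.55}, with $h$ obeying \eqref{eq2.56} and the initial data $h(0)=h'(0)=0$ forced above, is a formal solution of \eqref{eq2.3}--\eqref{eq2.2}, which is the assertion.

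The step I expect to be the main obstacle --- and the reason the conclusion is only a \emph{formal} analytical solution --- is the passage to $u_x$ at the interface and the status of \eqref{eq2.56} itself. Term-by-term $x$-differentiation of a sine series inflates the $k$-th coefficient by a factor $k$, so convergence of the differentiated series (hence the very meaning of the left-hand side of \eqref{eq2.56}) needs the extra decay supplied by $\varphi\in C^3$, $\psi\in C^2$ and the vanishing of $\varphi,\varphi'',\psi$ at the endpoints; but the Duhamel contribution involves $h''$, so even this step already presupposes regularity of $h$ that is not available a priori. More fundamentally, \eqref{eq2.56} is an integro-functional equation in which $h$, $h(0)$, $h'(0)$ and $h''$ all appear, and establishing existence, uniqueness, and the needed smoothness of $h(t)$ is not attempted here --- the theorem asserts only that, given such an $h$, \eqref{eq2.55} is the corresponding representation. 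This difficulty is exactly what motivates the weak-solution treatment of \eqref{eq1.1} in the next section.
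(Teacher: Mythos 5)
Your proposal follows essentially the same route as the paper: the paper's "proof" of Theorem \ref{thm2.4} is exactly the preceding construction (homogenizing each segment via $\frac{x}{l}h(t)$ and $\frac{L-x}{L-l}h(t)$, solving \eqref{eq2.25}--\eqref{eq2.26} by separation of variables and the homogenization/Duhamel principle, and closing the system by imposing \eqref{eq2.3} to obtain \eqref{eq2.56}), and your translation of the compatibility conditions \eqref{eq2.13}, \eqref{eq2.43} into the stated hypotheses on $\varphi,\psi$ with $h(0)=\varphi(l)$, $h'(0)=\psi(l)$ is consistent with it. Your closing caveats about term-by-term differentiation and the unresolved solvability of the integro-functional equation \eqref{eq2.56} likewise match the paper's own discussion of why the solution is only formal.
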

\par
In order to answer whether the formal solution $u$ is piecewise $C^2$, we need to investigate the properties of the series \eqref{eq2.56}. Notice that when $t=0$, \eqref{eq2.56} reduces to an integral equation
\begin{equation}\label{eq111}
\begin{aligned}
&-\sum_{k=1}^{\infty}\left(\frac{2}{l} \int_{0}^{l}(\varphi(\xi)-\frac{\xi}{l}h(0))\sin \frac{k \pi}{l} \xi \mathrm{d}\xi\right)\cdot(-1)^k\cdot\frac{k\pi}{l}\\
&+\sum_{k=1}^{\infty}\left(\frac{2}{L-l} \int_{l}^{L}(\varphi(\eta)-\frac{L-\eta}{L-l}\sin \frac{k \pi}{L-l}(\eta-l)\mathrm{d} \eta\right)\cdot\frac{k\pi}{L-l}\\
&-\frac{1}{l}h(0)-\frac{1}{L-l}h(0)=0.
\end{aligned}
\end{equation}
It is found to be a necessary condition that the initial condition $u(x,0)=\varphi(x)$ satisfies \eqref{eq111} and such $\varphi(x)$ indeed exists, for example, $\varphi(x)=0$. Furthermore, for \eqref{eq111}, we immediately raise an additional question: Does this equation hold for a general  $C^2$ function $\varphi(x)$ with $\varphi(0)=\varphi(l)=\varphi(L)=0=\varphi''(0)=\varphi''(l)=\varphi''(L)$? This should be discussed in the future. In fact, we have compared the frequencies from formal solution (theoretical solution) corresponding to \eqref{eq2.3}-\eqref{eq2.2} obtained in this section with the frequencies from numerical solution of \eqref{eq1.1} fitted by the FEM method in engineering experiments. The discrepancy rate between theoretical values and numerical values is found to be less than 5.0\%, thereby cross-validating the accuracy of our newly established model \eqref{eq2.3}-\eqref{eq2.2} and the effectiveness of the engineering numerical simulations.

\subsection{PINNs for the wave equation with one elastic support in a classical sense}\label{subsec2.4}
\par
As mentioned above, when we view this problem through its intuitive physical form, the solution to the proposed mathematical model is well-posed in the classical sense. Therefore, theoretically, we can approximate the solution using appropriate numerical simulation methods \cite{MM2022}. Here, we adopt PINNs to initially construct a neural network for the proposed mathematical model.
\par
We use Python scripts to run the corresponding PINNs, supported by PyTorch. The descriptive scripts can be downloaded from \url{https://github.com/gracietan}. We describe the constructed PINNs $U$ below as simulated solution for the proposed mathematical model in the following basic architecture.
\par
In our numerical experiments, we specify the following physical parameters: take $L=70$, and the location of the elastic support is given at $l=17.5$. The first part of the string is defined on $D_{1}=[0,17.5]$ and the second part of the string is defined on $D_{2}=[17.5,70]$, and the wholeness is defined on $D=[0,70]$, with time interval $t\in[0,10]$. The wave speed is $a=67.344$, and $\sigma=0.005 \text{or} 1$. The initial conditions are given by $u(x,0)=\frac{1}{10}\sin(\frac{\pi}{35}x)$ and $u_{t}(x,0)=0$, with boundary conditions $u(0,t)=u(L,t)=0$.
\par
For the problem, we use a weighted loss function, i.e.,
\begin{equation}\label{eq2.73}
\begin{aligned}
\mathrm{Loss} =&\lambda_{\mathrm{PDE}}\mathrm{Loss}_{\mathrm{PDE}}+\lambda_{\mathrm{IC}}\mathrm{Loss}_{\mathrm{IC}}+\lambda_{\mathrm{BC}}\mathrm{Loss}_{\mathrm{BC}}+\lambda_{\mathrm{CC}}\mathrm{Loss}_{\mathrm{CC}}\\
=&\lambda_{\mathrm{PDE}}\mathrm{MSE}_{\mathrm{PDE}}+\lambda_{\mathrm{IC}}\mathrm{MSE}_{\mathrm{IC}}+\lambda_{\mathrm{BC}}\mathrm{MSE}_{\mathrm{BC}}+\lambda_{\mathrm{CC}}\mathrm{MSE}_{\mathrm{CC}}
\end{aligned}
\end{equation}
by assigning weights to each term of the loss function for $\mathrm{MSE}_{\mathrm{PDE}}$, $\mathrm{MSE}_{\mathrm{IC}}$, $\mathrm{MSE}_{\mathrm{BC}}$, together with $\mathrm{MSE}_{\mathrm{CC}}$ representing the mean squared errors corresponding to the equations, the initial conditions, the boundary conditions, and the constraint conditions respectively. More specifically, let
\begin{equation}\label{eq2.74}
\begin{aligned}
\mathrm{MSE}_{\mathrm{PDE}}=&\frac{1}{N_{\mathrm{PDE1}}} \sum\limits_{i=1}^{N_{\mathrm{PDE1}}}\left|U_{1tt}\left(x_{i}^{\mathrm{PDE1}}, t_{i}^{\mathrm{PDE1}}\right)-a^{2} U_{1xx}\left(x_{i}^{\mathrm{PDE1}}, t_{i}^{\mathrm{PDE1}}\right)\right|^{2} \\
+&\frac{1}{N_{\mathrm{PDE2}}} \sum\limits_{i=1}^{N_{\mathrm{PDE2}}}\left|U_{2tt}\left(x_{i}^{\mathrm{PDE2}}, t_{i}^{\mathrm{PDE2}}\right)-a^{2} U_{2 xx}\left(x_{i}^{\mathrm{PDE2}}, t_{i}^{\mathrm{PDE2}}\right)\right|^{2}. \\
\end{aligned}
\end{equation}
Here $\left(x_{i}^{\mathrm{PDE1}}, t_{i}^{\mathrm{PDE1}}\right) \in D_{1} \times[0, 10]$ and $\left(x_{i}^{\mathrm{PDE2}}, t_{i}^{\mathrm{PDE2}}\right) \in D_{2} \times[0, 10]$, and the number of uniform distributed training points for the equations are $N_{\mathrm{PDE1}}=200$ and $N_{\mathrm{PDE2}}=600$.
Similarly, let
\begin{equation}\label{eq2.75}
\begin{aligned}
\mathrm{MSE}_{\mathrm{IC}}=&\frac{1}{N_{\mathrm{IC1}}} \sum\limits_{i=1}^{N_{\mathrm{IC1}}}(|U_{1}(x_{i}^{\mathrm{IC1}},0)-\frac{1}{10}\sin(\frac{\pi}{35}x_{i}^{\mathrm{IC1}})|^{2}+|U_{1t}(x_{i}^{\mathrm{IC1}},0)|^{2})\\
+&\frac{1}{N_{\mathrm{IC2}}} \sum\limits_{i=1}^{N_{\mathrm{IC2}}}(|U_{2}(x_{i}^{\mathrm{IC2}},0)-\frac{1}{10}\sin(\frac{\pi}{35}x_{i}^{\mathrm{IC2}})|^{2}+|U_{2t}(x_{i}^{\mathrm{IC2}},0)|^{2}),
\end{aligned}
\end{equation}
\begin{equation}\label{eq2.76}
\mathrm{MSE}_{\mathrm{BC}}=\frac{1}{N_{\mathrm{BC}}} \sum\limits_{i=1}^{N_{\mathrm{BC}}}(\left|U_{1}(0, t_{i}^{\mathrm{BC}})\right|^{2}+\left|U_{2}(70, t_{i}^{\mathrm{BC}})\right|^{2}),
\end{equation}
\begin{equation}\label{eq2.77}
\begin{aligned}
{\mathrm{MSE}}_{\mathrm{CC}}=&\frac{1}{N_{\mathrm{CC_{1}}}} \sum\limits_{i=1}^{N_{\mathrm{CC1}}}|U_{1}(17.5, t_{i}^{\mathrm{CC_{1}}})-U_{2}(17.5, t_{i}^{\mathrm{CC_{1}}})|^{2} \\
+&\frac{1}{N_{\mathrm{CC_{2}}}} \sum_{i=1}^{N_{\mathrm{CC_{2}}}}|-U_{1x}(17.5, t_{i}^{\mathrm{CC_{2}}})+U_{2x}(17.5, t_{i}^{\mathrm{CC_{2}}})-\sigma U_{1}(17.5,t_{i}^{\mathrm{CC_{2}}})|^{2}, \\
\end{aligned}
\end{equation}
where $x_{i}^{\mathrm{IC1}}\in D_1, x_{i}^{\mathrm{IC2}}\in D_2$, $t_{i}^{\mathrm{BC}} \in[0, 10]$, $t_{i}^{\mathrm{CC_{1}}} \in[0, 10]$, and $t_{i}^{\mathrm{CC_{2}}} \in[0, 10]$, together with $N_{\mathrm{IC1}}=200, N_{\mathrm{IC2}}=600$ as the number of uniform distributed training points for the initial conditions, $N_{\mathrm{BC}}=100$ as the number of uniform distributed training points for the boundary conditions, and $N_{\mathrm{CC1}}=200$, $N_{\mathrm{CC2}}=600$ as the number of uniform distributed training points for the constraint conditions \eqref{eq2.3}-\eqref{eq2.2}.

\begin{figure}[htbp]
	\centering
	\begin{minipage}[t]{0.49\textwidth}
		\centering
		\includegraphics[width=8.2cm]{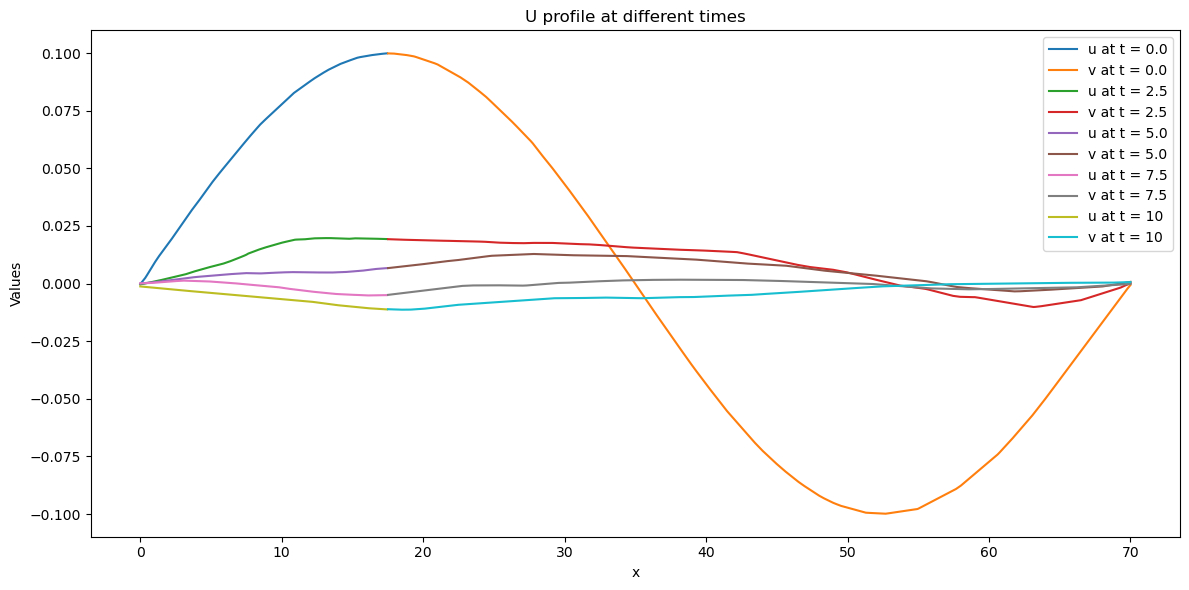}
		\caption{The profile of $U$ at different time with an elastic support at $x=17.5$ when $\sigma=0.005$\label{fig21}}
	\end{minipage}
	\begin{minipage}[t]{0.49\textwidth}
		\centering
		\includegraphics[width=8.2cm]{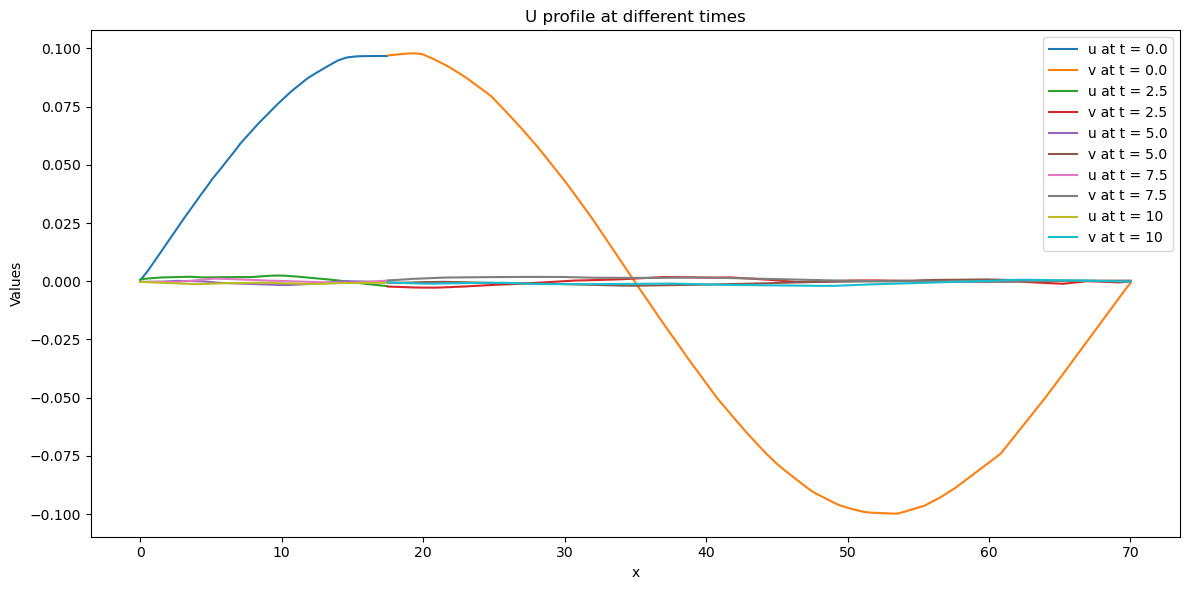}
		\caption{The profile of $U$ at different time with an elastic support at $x=17.5$ when $\sigma=1$\label{fig22}}
	\end{minipage}
\end{figure}

\begin{figure}[htbp]
	\centering
	\begin{minipage}[t]{0.49\textwidth}
		\centering
		\includegraphics[width=6.8cm]{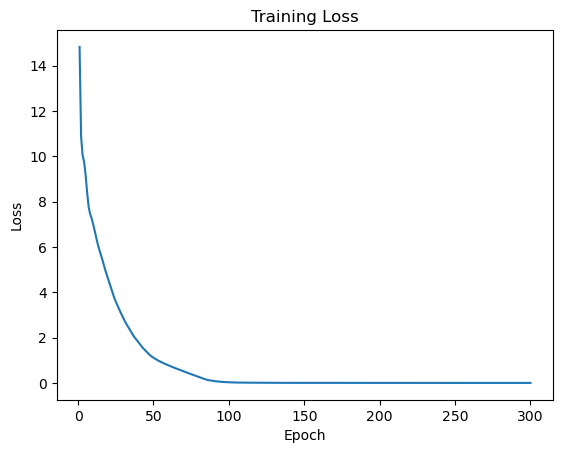}
		\caption{Training loss over epochs when $\sigma=0.005$\label{fig23}}
	\end{minipage}
	\begin{minipage}[t]{0.49\textwidth}
		\centering
		\includegraphics[width=6.8cm]{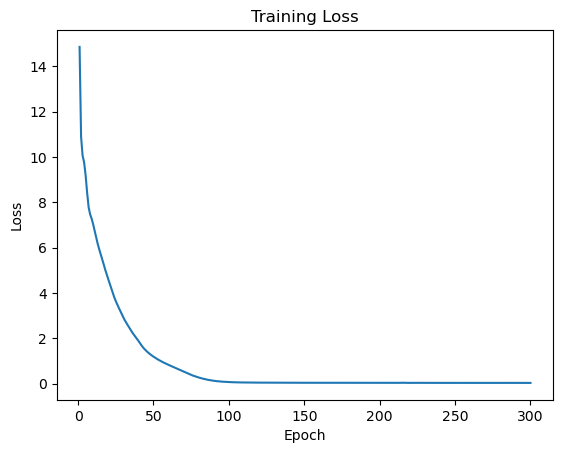}
		\caption{Training loss over epochs when $\sigma=1$\label{fig24}}
	\end{minipage}
\end{figure}
In Fig.\ref{fig21} and Fig.\ref{fig22}, we show the model fitting of the classical solution through the method of PINNs and plot figures to illustrate how the solution changes over time. Moreover, we show the training loss over epochs in Fig.\ref{fig23} and Fig.\ref{fig24}. To some extent, it can be clearly seen that elastic support has a stabilizing effect, and initial disturbances are quickly subdued, which meets the engineering requirements.
\begin{remark}
We notice that in this numerical experiment, the given initial value $u(x,0)=\varphi(x)\ne0$ at $x=l$, namely it does not satisfy the necessary condition mentioned in Theorem \ref{thm2.3}. Therefore, the simulated solution $U$ of PINNs on $[0,L]$ is not a $C^2$ solution, but at most a piecewise $C^2$ solution according to the position of known elastic support.
\end{remark}
\section{The wave equation with elastic supports in a weak sense}\label{sec3}
\par
Now, we would like to solve the wave equation with a finite number of elastic supports in terms of weak solution since a global classical solution is not obtained. For clarity, the initial-boundary value problem \eqref{eq1.1} is relabeled here as
\begin{equation}\label{eq3.1}
\left\{\begin{array}{l}
u_{t t}-a^{2} u_{x x}=\sum\limits_{k=1}^{n}-\beta_k u\left(x_{k}, t\right) \delta_{x=x_k},  (x, t) \in(0, L) \times(0, T], \\
u(0,t)=0, \quad u(L,t)=0,  t\in(0, T], \\
u(x,0)=\varphi(x), \quad u_{t}(x,0)=\psi(x),  x\in[0, L],
\end{array}\right.
\end{equation}
where $a$, $T$, $L$ and $\beta_k$ are positive constants with $x_k\in(0,L) (k=1,2,\dots,n)$.

\subsection{Definition of weak solution}\label{sec3.1}
First of all, we should focus on how to define weak solution corresponding to the initial-boundary value problem \eqref{eq3.1}. In order to do this, $u$ is assumed as a smooth solution to \eqref{eq3.1} and we respectively use $u'$ and $u''$ to represent the first and second derivative of the function $u$ with respect to $t$. We also denote $(u,v)$ as the inner product of $u$ and $v$ in the space $L^2\left((0,L)\right)$. For any $v\in H_{0}^{1}\left((0,L)\right)$, we multiply \eqref{eq3.1} by $v$, integrate it over $(0,L)$ and obtain
\begin{equation}\label{eq3.2}
\int_{0}^{L} \left(u_{t t} v-a^{2} u_{x x}v+\sum_{k=1}^{n} \beta_{k} u\left(x_{k}, t\right) v\left(x_{k}, t\right)\right)\mathrm{d}x =0.
\end{equation}
Let us introduce
\begin{equation}\label{eq112}
B[u, v ; t] \doteq \int_{0}^{L} a^{2} u_{x}(x,t) v_{x}(x,t) \mathrm{d}x+\sum\limits_{k=1}^{n} L \beta_{k} u\left(x_{k}, t\right) v\left(x_{k}, t\right).
\end{equation}
as a real bilinear form. Now, it is worth noting that \eqref{eq3.2} can be written as
\begin{equation}\label{eq3.3}
(u'', v) + B[u, v; t] = 0, 0 \leq t \leq T.
\end{equation}
Because $v \in H_{0}^{1}((0,L))$, as long as $u(\cdot,t)\in H_{0}^{1}((0,L))$ and $u_{tt}(\cdot,t)\in H^{-1}((0,L))$, then \eqref{eq3.3} makes sense. Hereafter, $\langle \cdot, \cdot \rangle$ is used to represent the duality pairing between $H^{-1}((0,L))$ and $H_{0}^{1}((0,L))$.
\begin{lemma}\label{lem3.1}
$B[u, v ; t]$ is claimed to be a real bilinear form as a result of\\
(1) $B[u, v; t]$ is bounded: there exists a positive constant $C_{1}$, such that
\begin{center}
$|B[u, v; t]| \leq C_{1}\|u(\cdot,t)\|_{H_{0}^{1}\left((0,L)\right)}\|v(\cdot,t)\|_{H_{0}^{1}((0, L))}, \forall u(\cdot,t), v(\cdot,t)\in H_{0}^{1}((0, L))$;
\end{center}
(2) $B[u, u; t]$ is coercive: there exists a positive constant $C_{2}$, such that
\begin{center}
$B[u, u ; t] \geqslant C_{2}\|u(\cdot,t)\|_{H_{0}^{1}((0, L))}^{2}, \forall u(\cdot,t)\in H_{0}^{1}((0, L))$.
\end{center}
\end{lemma}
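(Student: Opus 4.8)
The plan is to read off both properties directly from the definition \eqref{eq112}, the only genuinely useful input being the one–dimensional Sobolev embedding $H_0^1((0,L))\hookrightarrow C([0,L])$, which is what controls the point evaluations $u(x_k,t)$ in the sum. First I note that bilinearity and realness are immediate: for fixed $t$ and fixed $v(\cdot,t)$ the map $u(\cdot,t)\mapsto B[u,v;t]$ is linear because both $\int_0^L a^2u_xv_x\,\mathrm{d}x$ and each product $u(x_k,t)v(x_k,t)$ depend linearly on $u(\cdot,t)$; by the symmetry of the expression in $u$ and $v$ it is linear in the second slot as well, and all quantities are real since $a,L,\beta_k$ are real and the functions are real-valued. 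So the work is in (1) and (2).

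For (1), the key elementary estimate is that for $w\in H_0^1((0,L))$ one has $w(x)=\int_0^x w'(s)\,\mathrm{d}s$, whence by Cauchy--Schwarz $|w(x)|\le\sqrt{L}\,\|w_x\|_{L^2((0,L))}$ for every $x\in[0,L]$, in particular at each $x_k$. Applying this to $u(\cdot,t)$ and $v(\cdot,t)$ and Cauchy--Schwarz to the integral term yields
\begin{equation*}
|B[u,v;t]|\le a^2\|u_x(\cdot,t)\|_{L^2}\|v_x(\cdot,t)\|_{L^2}+\Big(L^2\sum_{k=1}^n\beta_k\Big)\|u_x(\cdot,t)\|_{L^2}\|v_x(\cdot,t)\|_{L^2},
\end{equation*}
so that $|B[u,v;t]|\le C_1\|u(\cdot,t)\|_{H_0^1((0,L))}\|v(\cdot,t)\|_{H_0^1((0,L))}$ with $C_1=a^2+L^2\sum_{k=1}^n\beta_k$, using $\|w_x\|_{L^2}\le\|w\|_{H_0^1}$.

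For (2), I observe that each summand $L\beta_k u(x_k,t)^2$ is nonnegative since $L,\beta_k>0$, hence $B[u,u;t]\ge a^2\|u_x(\cdot,t)\|_{L^2}^2$. Then the Poincaré inequality on $(0,L)$ gives a constant $C_P>0$ (one may take $C_P=L/\pi$) with $\|w\|_{L^2}\le C_P\|w_x\|_{L^2}$, so $\|w\|_{H_0^1}^2=\|w\|_{L^2}^2+\|w_x\|_{L^2}^2\le(1+C_P^2)\|w_x\|_{L^2}^2$; combining, $B[u,u;t]\ge C_2\|u(\cdot,t)\|_{H_0^1((0,L))}^2$ with $C_2=a^2/(1+C_P^2)$. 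The only step needing any thought is the pointwise bound $|w(x_k)|\le\sqrt{L}\|w_x\|_{L^2}$ that tames the Dirac-induced terms — everything else is Cauchy--Schwarz and Poincaré. I would emphasize that $C_1$ and $C_2$ are independent of $t$, which is exactly what the later energy/Galerkin argument for \eqref{eq3.1} requires; if one instead equips $H_0^1((0,L))$ with the equivalent norm $\|w_x\|_{L^2}$, the statement becomes cleaner still, with $C_1=a^2+L^2\sum_k\beta_k$ and $C_2=a^2$.
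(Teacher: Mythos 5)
Your proof is correct and follows essentially the same route as the paper's: Cauchy--Schwarz plus the one-dimensional embedding $H_0^1((0,L))\hookrightarrow L^{\infty}((0,L))$ to control the point evaluations for boundedness, and discarding the nonnegative support terms followed by Poincar\'e for coercivity. The only difference is that you make the embedding constant ($|w(x_k)|\le\sqrt{L}\,\|w_x\|_{L^2}$) and the resulting $C_1$, $C_2$ explicit, where the paper leaves them implicit.
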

\begin{proof}
(1) Since $a^2, \beta_{k}$ are bounded and $\beta_{k}\ge0$, there are positive constants $C_1$ and $C$, such that
\begin{center}
$\begin{aligned}
\left|B\left[u, v;t\right]\right|& = \left|\int_{0}^{L} a^{2} u_{x} v_{x} \mathrm{d}x + \sum_{k=1}^{n}L\beta_{k}u\left(x_k, t\right) v\left(x_k, t\right)\right|\\
&\leq C_1\|\mathrm{D}u(\cdot,t)\|_{L^{2}((0, L))}\|\mathrm{D}v(\cdot,t)\|_{L^{2}((0, L))} + C_1\|u(\cdot,t)\|_{L^{\infty}((0, L))}\|v(\cdot,t)\|_{L^{\infty}((0, L))} \\
&\leq C\|u(\cdot,t)\|_{H_{0}^{1}((0, L))}\|v(\cdot,t)\|_{H_{0}^{1}((0, L))}.
\end{aligned}$
\end{center}
(2) Since $a^2, \beta_{k}$ are bounded and $\beta_{k}\ge0$, by the uniform hyperbolicity condition, there is a positive constant $C_{2}$, such that
\begin{center}
$\begin{aligned}
B\left[u, u; t\right] &= \int_{0}^{L} a^{2} u_{x}^{2} \mathrm{d}x + \sum_{k=1}^{n}L\beta_{k} u^{2}(x_k, t)\\
& \geq a^{2}\int_{0}^{L}|\mathrm{D}u|^{2}\mathrm{d}x\\
& \geq C_{2}\|u(\cdot,t)\|_{H_{0}^{1}((0, L))}^{2}.
\end{aligned}$
\end{center}
\end{proof}
\begin{definition}\label{dy3.1}
If $u \in L^{2}(0, T; H_{0}^{1}((0, L)))$ satisfying\\
(H1)  $u'\in L^{2}\left(0, T ; L^2((0, L)))\right., u''\in L^{2}\left(0, T ; H^{-1}((0, L)))\right.$;\\
(H2) For each $v\in H_{0}^{1}((0, L))$,
\begin{center}
$\left\langle u'', v\right\rangle+B[u, v ; t]=0$, for $t \in [0, T]$ a.e.;\\
\end{center}
(H3) $u(0)=\varphi(x) \in H_{0}^{1}((0,L)),u'(0)=\psi(x) \in L^2((0,L))$,\\
then the function $u$ is called a weak solution to the initial-boundary value problem \eqref{eq3.1} of the wave equation with a finite number of elastic supports.
\end{definition}
Ultimately, we obtain the definition of the solution to \eqref{eq3.1}, as one of the main results of our work.
\subsection{Existence and uniqueness of weak solution}\label{sec3.2}
\par
Here, we state another main result of this paper.
\begin{theorem}\label{thm3.1}
Under the assumptions (H1), (H2) and (H3), \eqref{eq3.1} admits a unique solution $u \in L^{\infty}\left(0, T ; H_{0}^{1}((0, L))\right)$.
\end{theorem}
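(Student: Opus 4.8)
The plan is to prove existence by the Faedo--Galerkin method and uniqueness by an energy argument, following the classical scheme for second-order hyperbolic equations; the boundedness and coercivity of $B[\cdot,\cdot;t]$ from Lemma~\ref{lem3.1} are precisely the structural inputs needed, and the only nonstandard feature---the point-mass right-hand side---is tamed by the one-dimensional embedding $H_0^1((0,L))\hookrightarrow C([0,L])$. For the Galerkin construction I take the orthogonal basis $w_k(x)=\sqrt{2/L}\,\sin(k\pi x/L)$ of $H_0^1((0,L))$, orthonormal in $L^2((0,L))$, and for each $m\ge1$ I seek $u_m(t)=\sum_{k=1}^m d_m^k(t)\,w_k$ with $(u_m''(t),w_j)+B[u_m(t),w_j;t]=0$ for $j=1,\dots,m$ and $d_m^k(0)=(\varphi,w_k)$, $(d_m^k)'(0)=(\psi,w_k)$, so that $u_m(0)\to\varphi$ in $H_0^1$ and $u_m'(0)\to\psi$ in $L^2$. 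Since the coefficients of $B$ are bounded (and here $t$-independent), this is a linear constant-coefficient system of $m$ second-order ODEs, which has a unique solution on $[0,T]$.

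Next I derive uniform a priori bounds. Multiplying the $j$-th equation by $(d_m^j)'(t)$, summing over $j$, and using the symmetry of $B$ gives $\frac{d}{dt}\bigl(\|u_m'(t)\|_{L^2}^2+B[u_m(t),u_m(t);t]\bigr)=0$, so $\|u_m'(t)\|_{L^2}^2+B[u_m(t),u_m(t);t]$ is constant in $t$; combined with the coercivity and boundedness of $B$ and the bounds on the initial data this yields
\[
\sup_{0\le t\le T}\Bigl(\|u_m(t)\|_{H_0^1((0,L))}+\|u_m'(t)\|_{L^2((0,L))}\Bigr)\le C\bigl(\|\varphi\|_{H_0^1((0,L))}+\|\psi\|_{L^2((0,L))}\bigr)
\]
uniformly in $m$, and testing the equation against $v\in H_0^1$ bounds $u_m''$ uniformly in $L^2(0,T;H^{-1}((0,L)))$. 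Passing to a subsequence, $u_m$ converges weakly-$*$ to some $u$ in $L^\infty(0,T;H_0^1)$, $u_m'$ weakly-$*$ to $u'$ in $L^\infty(0,T;L^2)$, and $u_m''$ weakly to $u''$ in $L^2(0,T;H^{-1})$; in particular $u$ has the asserted regularity. To verify (H2) I fix $j$, a test function $v\in\mathrm{span}\{w_1,\dots,w_j\}$ and $\eta\in C_c^\infty((0,T))$ and pass to the limit in $\int_0^T\bigl[(u_m'',v)+B[u_m,v;t]\bigr]\eta\,dt=0$; the elliptic and time terms are routine, and each point term passes to the limit because, by $H_0^1((0,L))\hookrightarrow C([0,L])$, the map $w\mapsto\int_0^T L\beta_k\,w(x_k,t)\,v(x_k)\,\eta(t)\,dt$ is a bounded linear functional on $L^2(0,T;H_0^1)$ and hence weakly continuous. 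Density in $v$ and $\eta$ gives (H2), while (H3) follows from the usual integration by parts in $t$ with $\eta(T)=0$, $\eta(0)\neq0$, using $u_m(0)\to\varphi$ and $u_m'(0)\to\psi$.

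For uniqueness it is enough to show that a weak solution with $\varphi=\psi=0$ vanishes. One cannot simply test with $v=u'(t)$ because $u'(t)\in L^2((0,L))$ need not lie in $H_0^1((0,L))$; instead I use the antiderivative device. Fix $s\in(0,T]$ and set $v(t)=\int_t^s u(\tau)\,d\tau$ for $0\le t\le s$ and $v(t)=0$ for $s\le t\le T$, so that $v(\cdot,t)\in H_0^1((0,L))$, $v$ is absolutely continuous with values in $H_0^1$, and $v'(t)=-u(t)$ on $(0,s)$. Inserting $v$ into (H2), integrating over $(0,s)$, integrating by parts in $t$ in $\int_0^s\langle u'',v\rangle\,dt$, and using symmetry of $B$ to write $\int_0^s B[u,v;t]\,dt=-\frac12\int_0^s\frac{d}{dt}B[v,v;t]\,dt$, the endpoint contributions at $t=s$ vanish since $v(s)=0$ and those at $t=0$ vanish since $u(0)=0$ and $u'(0)=0$; what remains is $\frac12\|u(s)\|_{L^2}^2+\frac12 B[v(0),v(0);0]=0$. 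Both summands are nonnegative by the coercivity in Lemma~\ref{lem3.1}(2), hence $u(s)=0$ for every $s\in(0,T]$ and $u\equiv0$.

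I expect the uniqueness step to be the main obstacle. The a priori estimates and the passage to the limit are entirely standard once Lemma~\ref{lem3.1} is in hand, but in the uniqueness argument the natural energy test function is inadmissible, so one is forced into the antiderivative trick, and the several integrations by parts in $t$ must be justified at the low regularity $u\in L^2(0,T;H_0^1)$, $u'\in L^2(0,T;L^2)$, $u''\in L^2(0,T;H^{-1})$---either invoking the standard continuity-in-time lemmas for Bochner spaces over the Gelfand triple $H_0^1\hookrightarrow L^2\hookrightarrow H^{-1}$, or after first mollifying in $t$ and then removing the regularization. That bookkeeping is the part requiring the most care.
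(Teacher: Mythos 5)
Your proposal is correct and follows essentially the same route as the paper: a Faedo--Galerkin construction with an orthogonal basis, the energy identity obtained by testing with $u_m'$ together with the coercivity and boundedness of $B$ from Lemma~\ref{lem3.1} to get uniform bounds, weak(-$*$) compactness to pass to the limit and verify (H1)--(H3), and the antiderivative test function $\int_t^s u\,d\tau$ for uniqueness. The only cosmetic differences are your explicit choice of the sine basis and your more careful remarks on why the point-mass terms survive the weak limit via $H_0^1((0,L))\hookrightarrow C([0,L])$, which the paper leaves implicit.
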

\begin{proof}
If $u \in L^{\infty}(0, T ; H_{0}^{1}((0, L)))$, 
we have $ u_{x x} \in L^{\infty}\big(0, T; H^{-1}((0, L))\big)$, 
and $u''\in L^{2} (0, T ; H^{-1}((0, L)))$. 
Furthermore, 
if $u'\in L^{\infty}(0 , T ; L ^ { 2 } ( ( 0 , L ) ))$, 
$u \in C ([0, T] ; L^{2}((0, L)))$ 
and $u''\in C([0, T] ; H^{-1}((0, L)))$ are obtained. 
Therefore, $u'(0)$ and $u(0)$ make sense \cite{Evans1998Partial}.

Taking complete smooth functions $\left\{w_{k}=w_{k}(x)\right\}(k=1,2, \cdots)$ which are orthogonal both in $L^2((0,L))$ and $H_{0}^{1}((0,L))$, one can find
\begin{center}
$\left\{w_{k}\right\}_{k=1}^{\infty}$ is the orthogonal basis for $H_{0}^{1}((0, L))$
\end{center}
and
\begin{center}
$\quad\left\{w_{k}\right\}_{k=1}^{\infty}$ is the standard orthogonal basis for $L^{2}((0,L))$ also in \cite{Evans1998Partial}.
\end{center}
Fix an integer $m$ and consider $\left\{w_{1},w_{2},\dots,w_{m}\right\}$ as $m$-linearly independent functions in $H_{0}^{1}((0,$\\$L))$, which generates an $m$-dimensional linear subspace $W_{m}((0, L))=\operatorname{span}\{w_{1},\dots,w_{m}\}$. Moreover, we choose $\xi_{m, i}$ and $\eta_{m,j}$ here so that when $m \rightarrow \infty$,
\begin{equation}\label{eq3.4}
\begin{array}{l}
\left\|\sum\limits_{i=1}^{m} \xi_{m,i} w_{i}-\varphi(x)\right\|_{H_{0}^{1}((0, L))}{\longrightarrow} 0,   \left\|\sum\limits_{i=1}^{m} \eta_{m,i} w_{i}-\psi(x)\right\|_{L^{2}((0, L))}{\longrightarrow} 0.\\
\end{array}
\end{equation}
By virtue of finding an approximate $m$-order solution $u_m$ on $W_{m}((0, L))$, we have
\begin{center}
$u_{m}=\sum\limits_{i=1}^{m} d_{m,i}(t) w_{i}(x)$.
\end{center}
When taking $v=\left\{w_{j}\right\}_{j=1}^{\infty} \in H_{0}^{1}((0, L))$, and $u=u_{m}$, we then have
\begin{equation}\label{eq3.5}
\left(u_{m}^{\prime \prime}, w_{j}\right)+B\left[u_{m}, w_{j} ; t\right]=0   \text {a.e.} 0 \leq t \leq T,
\end{equation}
where
\begin{equation}\label{eq3.6}
\left(u_{m}^{\prime \prime}, w_{j}\right)=\left(\sum_{i=1}^{m} d_{m,i}^{\prime \prime}(t) w_{i}(x), w_{j}(x)\right)=d_{m,j}^{\prime \prime}(t),
\end{equation}
together with
\begin{equation}\label{eq3.7}
\begin{aligned}
B\left[u_{m}, w_{j} ; t\right] &=\int_{0}^{L} a^{2} (u_m)_x(w_j)_x\mathrm{d}x+\sum\limits_{k=1}^{n}L\beta_k u_{m}\left(x_k, t\right) w_{j}\left(x_k\right) \\
&=\int_{0}^{L} a^{2} \sum\limits_{i=1}^{m} d_{m,i}(t) (w_{i}(x))_{x}(w_{j}(x))_{x} \mathrm{d} x+\sum\limits_{k=1}^{n}\sum\limits_{i=1}^{m}L\beta_{k} d_{m,i}(t) w_{i}\left(x_k\right) w_{j}\left(x_k\right)\\
&=\sum\limits_{i=1}^{m} B\left[w_{i}, w_{j} ; t\right] d_{m,i}(t).
\end{aligned}
\end{equation}
Consequently, it yields that
\begin{equation}\label{eq3.8}
d_{m, j}^{\prime \prime}(t)+\sum_{i=1}^{m} B\left[w_{i}, w_{j};t\right] d_{m,i}(t)=0,
\end{equation}
and it can be seen that $d_{m,i}=d_{m,i}(t)$ is determined by the Cauchy problem of the following system of second-order ordinary differential equations with constant coefficients
\begin{equation}\label{eq3.9}
\left\{\begin{array}{l}
d_{m,j}^{\prime \prime}(t)+\sum\limits_{i=1}^{m} B\left[w_{i}, w_{j};t\right] d_{m,i}(t)=0, \\
d_{m,j}(0)=\xi_{m,j},  d_{m, j}^{\prime}(0)=\eta_{m,j},  j=1, \dots, m.
\end{array}\right.
\end{equation}
Since the coefficients of $d_{m,i}$ are constants, it is known that there exists a unique solution $d_{m,i}(t)$, $i=1,\dots,m$ in \eqref{eq3.9} by the existence and uniqueness theorem in ordinary differential equations, with $d_{m,i} \in C^{2}([0, T])$ and $d''_{m,i}$ being absolutely continuous over $[0,T]$. Thus, an approximate solution $u_m$ is obtained.
\par
With multiplying \eqref{eq3.5} by $d'_{m,j}$, summing $j$ from $1$ to $m$ and noting the expression for $u_m$, we integrate \eqref{eq3.5} over $(0,t)$ to conclude
\begin{equation}\label{eq3.10}
\int_{0}^{t}((u_{m}^{\prime \prime}, u_{m}^{\prime})+B\left[u_{m}, u_{m}^{\prime} ; t\right])\mathrm{d} t=0,  0 \leq t \leq T.
\end{equation}
It is obvious that
\begin{equation}\label{eq3.11}
\int_{0}^{t}\left(u_{m}^{\prime \prime}, u_{m}^{\prime}\right) \mathrm{d} t=\frac{1}{2} \int_{0}^{t}\int_{0}^{L}\frac{\mathrm{d}}{\mathrm{d} t}\left(u_{m}^{\prime}\right)^{2} \mathrm{d} x\mathrm{d} t=\frac{1}{2}\left\|u_{m}^{\prime}(x,t)\right\|_{L^{2}(( 0, L))}^{2}-\frac{1}{2}\left\|u_{m}^{\prime}(x,0)\right\|_{L^{2}(( 0, L))}^{2}.
\end{equation}
In addition,
\begin{equation}\label{eq3.12}
\begin{aligned}
B\left[u_{m}, u_{m}^{\prime} ; t\right]&=\int_{0}^{L} a^{2} (u_m)_{x}(u_{m}^{\prime})_{x}\mathrm{d}x+\sum\limits_{k=1}^{n}L\beta_{k} u_{m}\left(x_k, t\right) u_{m}^{\prime}(x_k, t)\\
&=\frac{\mathrm{d}}{\mathrm{d} t} \left( \int_{0}^{L} \frac{1}{2} a^{2} (u_m)_{x}^{2}\mathrm{d} x+\frac{1}{2}\sum\limits_{k=1}^{n}L\beta_{k}u_{m}^{2}(x_k, t)\right).
\end{aligned}
\end{equation}
Thus, we have a positive constant $C_2$, such that
\begin{equation}\label{eq3.13}
\begin{aligned}
\int_{0}^{t} B\left[u_{m}, u_{m}^{\prime} ; t\right] \mathrm{d}t
&=\int_{0}^{L} \frac{1}{2} a^{2} (u_{m}(x, t))_{x}^{2}\mathrm{d}x-\int_{0}^{L} \frac{1}{2} a^{2} (u_{m}(x, 0))_{x}^{2}\mathrm{d}x\\
&                                          +\frac{1}{2}\sum\limits_{k=1}^{n}L\beta_{k}u_{m}^{2}(x_k,t)-\frac{1}{2}\sum\limits_{k=1}^{n}L\beta_{k}u_{m}^{2}(x_k,0)\\
&\geqslant C_{2}\left(\left\|u_{m}(x,t)\right\|_{H_{0}^{1}((0, L))}^{2}-\int_{0}^{L}(u_{m}(x, 0))_{x}^{2}\mathrm{d}x+u_{m}^{2}(x_k,t)-u_{m}^{2}(x_k,0)\right).
\end{aligned}
\end{equation}
We substitute \eqref{eq3.11}, \eqref{eq3.13} into \eqref{eq3.10} to conclude
\begin{equation}\label{eq3.14}
\begin{array}{l}
\frac{1}{2}\|u_{m}^{\prime}(x,t)\|_{L^{2}((0, L))}^{2}-\frac{1}{2}\|u_{m}^{\prime}(x,0)\|_{L^{2}((0, L))}^{2}+C_{2}\left\|u_{m}(x,t)\right\|_{H_{0}^{1}((0, L))}^{2}\\
-C_{2}\int_{0}^{L}(u_{m}(x, 0))_{x}^{2}\mathrm{d}x+C_{2}u_{m}^{2}(x_k,t)-C_2u_{m}^{2}(x_k,0)\leq 0.
\end{array}
\end{equation}
Therefore, there are positive constants $M$ and $C$ being independent of $m$, such that
\begin{equation}\label{eq3.15}
\begin{array}{l}
\left\|u_{m}^{\prime}(x,t)\right\|_{L^{2}((0, L))}^{2}+\left\|u_{m}(x,t)\right\|_{H_{0}^{1}((0, L))}^{2} \\
\leq M\left(\|u_m^{\prime}(x,0)\|_{L^2((0,L))}^{2}+\left\|u_{m}(x,0)\right\|_{H_{0}^{1}((0, L))}^{2}\right)\\
\le C.
\end{array}
\end{equation}
Because of the weak compactness, there is a sequence still denoted as $u_m$ and when $m{\rightarrow}\infty$, there is  $u \in L^{2}\left(0, T ; H_{0}^{1}((0, L))\right), u^{\prime} \in L^{2}\left(0, T ; L^{2}((0, L))\right)$ satisfying
\begin{center}
$\begin{aligned}
u_{m}\rightharpoonup u \text{in} L^{2}\left(0, T ; H_{0}^{1}((0, L))\right) \\
u_{m}^{\prime} \rightharpoonup u^{\prime} \text{in} L^{2}\left(0, T ; L^{2}((0, L))\right).
\end{aligned}$
\end{center}
Set
\begin{center}
$C_{T}^{1}([0, T])=\left\{\phi \in C^{1}([0, T]) \mid \phi(T)=0\right\}$.
\end{center}
Since $\left\{w_{k}\right\}_{k=1}^{\infty}$ is an orthogonal basis of $H_{0}^{1}((0, L))$, any element belonging to $H_{0}^{1}((0, L))$ can be represented linearly by $w_{k}$. For the fixed $m>N$ and any given $d_{j} \in C_{T}^{1}([0, T])$,we consider the function
\begin{equation}\label{eq3.16}
\omega=\sum_{j=1}^{N} d_{j} w_{j}.
\end{equation}
Multiplying \eqref{eq3.5} by $d_j$, summing from $1$ to $N$, and integrating the result by parts with respect to $t$, we have
\begin{equation}\label{eq3.17}
\int_{0}^{T}((u_{m}^{\prime \prime}, \omega)+B\left[u_{m}, \omega ; t\right]) \mathrm{d} t=0,
\end{equation}
where
\begin{equation}\label{eq3.18}
\begin{aligned}
\int_{0}^{T}\left(u_{m}^{\prime \prime}, \omega\right) \mathrm{d} t&=\int_{0}^{T}(\frac{\mathrm{d}}{\mathrm{d} t} u_{m}^{\prime}, \omega) \mathrm{d} t\\
&=-\int_{0}^{T}\left(u_{m}^{\prime}, \omega^{\prime}\right) \mathrm{d} t-\left(u_{m}^{\prime}(0), \omega(0)\right).
\end{aligned}
\end{equation}
As a result, we have
\begin{equation}\label{eq3.19}
\int_{0}^{T}(B\left[u_{m}, \omega; t\right]-(u_{m}^{\prime}, \omega^{\prime})) \mathrm{d} t=(u_{m}^{\prime}(0), \omega(0)).
\end{equation}
Letting $m \rightarrow \infty$, we obtain
\begin{center}
$\int_{0}^{T}(B[u, \omega, t]-(u^{\prime}, \omega^{\prime})) \mathrm{d} t=(u^{\prime}(0), \omega(0)).$
\end{center}
Also, let
\begin{center}
$\Omega(x,t)=\left\{\omega \in L^{2}\left(0, T ; H_{0}^{1}((0, L))\right) \mid \omega^{\prime} \in L^{2}\left(0, T ; L^{2}((0, L))\right), \omega(x, T)=0\right.,x \in(0, L)\}.$
\end{center}
The norm of $\Omega$ is taken here by
\begin{center}
$\|\omega\|_{\Omega}^{2}=\|\omega\|_{L^{2}((0, T ; H_{0}^{1}((0, L)))}^{2}+\left\|\omega^{\prime}\right\|_{\left.L^{2}(0, T; L^{2}((0, L))\right)}^{2}$,
\end{center}
and the function given by \eqref{eq3.19} is dense in $\Omega$. Then, for any $\omega\in \Omega$, we have
\begin{equation}\label{eq3.20}
\int_{0}^{T}(B[u, \omega ; t]-(u^{\prime}, \omega^{\prime})) \mathrm{d} t=(u^{\prime}(0), \omega(0)).
\end{equation}
For any $\phi \in C_{0}^{\infty}([0, T])$, and $v\in H_{0}^{1}((0, L))$, insert $\omega=\phi v$ into \eqref{eq3.20}, which yields
\begin{center}
$\left\langle\int_{0}^{T}(a^{2} u_{x}\phi_{x}-\left(u^{\prime}, \phi^{\prime}\right)+\sum\limits_{k=1}^{n}L\beta_{k} u\left(x_k, t\right)\phi(x_k,t))\mathrm{d}t, v\right\rangle=0, \forall v \in H_{0}^{1}((0, L)).$
\end{center}

Therefore,
\begin{center}
$\int_{0}^{T}(a^{2} u_{x}\phi_{x}-\left(u^{\prime}, \phi^{\prime}\right)+\sum\limits_{k=1}^{n}L\beta_{k} u(x_k, t)\phi(x_k,t))\mathrm{d}t=0, \forall \phi \in C_{0}^{\infty}([0 ,T])$.
\end{center}
It means that $u^{\prime \prime} \in L^{2}(0,T ; H^{-1}((0, L)))$ and for any $v\in H_{0}^{1}((0, L))$, we have
\begin{center}
$\left\langle u^{\prime \prime}, v\right\rangle+B[u, v ; t]=0$ a.e. $t\in[0,T]$.
\end{center}
This verifies (H1) and (H2) in the definition of the weak solution.
\par
It is shown below that $u$ satisfies the initial conditions of (H3) in the definition of the weak solution, i.e. Definition \ref{dy3.1}.
\par
We take $\omega \in C^{2}\left([0, T] ; H_{0}^{1}((0, L))\right)$ which satisfies $\omega(x, T)=\omega^{\prime}(x, T)=0, x \in(0, L)$ in \eqref{eq3.20}\\and again use integration by part with respect to $t$ to get
\begin{equation}\label{eq3.21}
\begin{aligned}
&\int_{0}^{T}(\left\langle\omega^{\prime \prime}, u\right\rangle+B[u, \omega ; t]) \mathrm{d} t\\
&=(u^{\prime}(0), \omega(0))+(u(T), \omega^{\prime}(T))-(u(0), \omega^{\prime}(0))\\
&=(u^{\prime}(0), \omega(0))-(u(0), \omega^{\prime}(0)) .
\end{aligned}
\end{equation}
Similarly, by \eqref{eq3.19}, we get
\begin{center}
$\int_{0}^{T}(\left\langle\omega^{\prime \prime}, u_{m}\right\rangle+B\left[u_{m}, \omega; t\right]) \mathrm{d} t=
(u_{m}^{\prime}(0), \omega(0))-(u_{m}(0), \omega^{\prime}(0)).$
\end{center}
For the subsequence $u_{m_{l}}$, using the initial conditions of the problem $\eqref{eq3.9}_2$, we can conclude that as $m_l \rightarrow \infty$,
\begin{equation}\label{eq3.22}
\begin{aligned}
\left.u_{m_{l}}\right|_{t=0} & =\sum_{i=1}^{m_{l}} d_{m_{l, i}}(0)w_{i}(x) =\sum_{i=1}^{m_l} \xi_{m_{l, i}} w_{i}(x) \longrightarrow \varphi(x),\\
\left.u_{m_{l}}^{\prime}\right|_{t=0} & =\sum_{i=1}^{m_{l}} d'_{m_{l, i}}(0) w_{i}(x)=\sum_{i=1}^{m_{l}} \eta_{m_l,i} w_{i}(x) \longrightarrow \psi(x),
\end{aligned}
\end{equation}
so
\begin{equation}\label{eq3.23}
\int_{0}^{T}(\left\langle\omega^{\prime \prime}, u\right\rangle+B[u, \omega;t]) \mathrm{d} t=(u_{1}, \omega(0))-(u_{0}, \omega^{\prime}(0)).
\end{equation}
If we compare \eqref{eq3.21} with $\eqref{eq3.23}$ and use the arbitrariness of $\omega(0)$ and $\omega'(0)$, it is known that
\begin{center}
$u(0)=\varphi(x),   u'(0)=\psi(x).$
\end{center}
So far, we have proved the existence of the weak solution.
\par
We are going to prove the uniqueness of the weak solution. This is equivalent to proving that if $\varphi(x)=\psi(x)=0$, then $u=0$. For any $s\in(0,T)$, we construct a test function as
\begin{center}
$\omega(x,t)=\left\{\begin{array}{ll}
-\int_{t}^{s} u(x,\nu ) d \nu, & t \leq s, \\
0, & t>s .
\end{array}\right.$
\end{center}
It is known that when $0\le t\le s$, $\omega'=u(x,t)$. $\omega$ is taken into \eqref{eq3.19}, yielding
\begin{center}
$\int_{0}^{s}(B\left[\omega^{\prime}, \omega ; t\right]-(u^{\prime}, u)) \mathrm{d} t=0,$
\end{center}
where
\begin{center}
$\begin{aligned}
B\left[\omega^{\prime}, \omega ; t\right] &=\int_{0}^{L} a^{2} \omega_{x}^{\prime} \omega_{x}\mathrm{d}x+\sum\limits_{k=1}^{n}L\beta_k\omega^{\prime}(x_k, t) \omega(x_k, t)\\
&=\frac{1}{2} \frac{\mathrm{d}}{\mathrm{d} t} B[\omega, \omega; t],
\end{aligned}$
\end{center}
thus,
\begin{center}
$\int_{0}^{s}(\frac{1}{2} \frac{\mathrm{d}}{\mathrm{d} t} B[\omega, \omega ; t]-\frac{1}{2} \frac{\mathrm{d}}{\mathrm{d} t}(u, u))\mathrm{d}t=0$,
\end{center}
i.e.
\begin{center}
$B[\omega(x,0), \omega(x,0) ; 0]+(u(x,s), u(x,s))=0$.
\end{center}
It is known that there are positive constants $C_3$ and $C$, so that
\begin{equation}\label{eq3.24}
C_{3}\int_{0}^{L}|\mathrm{D} \omega{2} \mathrm{d} x \leqslant \int_{0}^{L} a^{2} \omega(0)_{x} \omega(0)_{x} \mathrm{d} x=B[\omega(0), \omega(0) ; 0].
\end{equation}
Notice that
\begin{equation}\label{eq3.25}
C_{3}\int_{0}^{L}|\mathrm{D} \omega(x,0)|^{2} \mathrm{d} x\geqslant C\|\omega(\cdot,0)\|_{H_{0}^{1}((0, L))}^{2},
\end{equation}
then
\begin{equation}\label{eq3.26}
\|\omega(\cdot,0)\|_{H_{0}^{1}((0, L))}^{2}+\|u(\cdot,s)\|_{L^{2}((0, L))}^{2} \leqslant 0.
\end{equation}
Hence $u=0$.
\end{proof}
\subsection{Regularity of weak solution}\label{subsec3.3}
\begin{theorem}\label{thm3.2}
Along with the assumptions of Theorem \ref{thm3.1} and $\varphi(x) \in H^{2}((0, L)), \psi(x) \in H_{0}^{1}((0, L))$, there exists a unique $u \in C\left([0, T] ; H_{0}^{1}((0, L))\right) \cap L^{\infty}\left(0, T ; H^{2}((0, L))\right)$ that satisfies $u^{\prime} \in C\left([0, T];L^{2}((0, L))\cap H_{0}^{1}((0, L)) \right)$, $u^{\prime \prime} \in L^{\infty}\left(0, T ; L^{2}((0, L))\right)$ and \eqref{eq3.1}.
\end{theorem}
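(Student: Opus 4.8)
The strategy is to bootstrap the Galerkin construction of Theorem~\ref{thm3.1} by one more level of energy estimates, in the spirit of the improved‑regularity theorem for second‑order hyperbolic equations in \cite{Evans1998Partial}. The crucial simplification here is that $B[\cdot,\cdot;t]$ does not actually depend on $t$, since $a^{2}$, the stiffnesses $\beta_{k}$ and the support points $x_{k}$ are fixed; hence, differentiating the approximate system $d_{m,j}''(t)+\sum_{i=1}^{m}B[w_{i},w_{j}]\,d_{m,i}(t)=0$ with respect to $t$ shows that $v_{m}:=u_{m}'$ solves the \emph{same} system, with initial data $v_{m}(0)=\sum_{i}\eta_{m,i}w_{i}\to\psi$ in $H_{0}^{1}((0,L))$ and $v_{m}'(0)=u_{m}''(0)$ read off from the equation at $t=0$. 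Running the energy identity \eqref{eq3.10}--\eqref{eq3.14} verbatim, but with $u_{m}$ replaced by $v_{m}$ and using $\beta_{k}\ge 0$, I would obtain, as in \eqref{eq3.15}, the $m$‑uniform bound
\begin{equation*}
\|u_{m}''(\cdot,t)\|_{L^{2}((0,L))}^{2}+\|u_{m}'(\cdot,t)\|_{H_{0}^{1}((0,L))}^{2}\le M\Big(\|u_{m}''(\cdot,0)\|_{L^{2}((0,L))}^{2}+\|u_{m}'(\cdot,0)\|_{H_{0}^{1}((0,L))}^{2}\Big),\qquad 0\le t\le T,
\end{equation*}
so that everything reduces to estimating the right‑hand side independently of $m$.

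Controlling $\|u_{m}'(\cdot,0)\|_{H_{0}^{1}}=\|\sum_{i}\eta_{m,i}w_{i}\|_{H_{0}^{1}}$ is immediate from $\psi\in H_{0}^{1}((0,L))$ and \eqref{eq3.4}. The substantial point is the $L^{2}$‑bound on $u_{m}''(0)$. From $(u_{m}''(0),w_{j})=-B[\varphi_{m},w_{j}]$ with $\varphi_{m}=\sum_{i}\xi_{m,i}w_{i}$, I would integrate $\int_{0}^{L}a^{2}(\varphi_{m})_{x}(w_{j})_{x}\,\mathrm{d}x$ by parts on each subinterval $(x_{k-1},x_{k})$ cut out by the supports: this produces the genuine $L^{2}$ term $-a^{2}(\varphi_{m})_{xx}$ together with interface contributions $\sum_{k}a^{2}[(\varphi_{m})_{x}]_{x=x_{k}}w_{j}(x_{k})$, which combine with the point‑evaluation terms $\sum_{k}L\beta_{k}\varphi_{m}(x_{k})w_{j}(x_{k})$, the latter controlled via the one‑dimensional embedding $H_{0}^{1}((0,L))\hookrightarrow C([0,L])$. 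This is exactly the place where the conclusion must be read in the piecewise sense: $u(\cdot,t)\in H^{2}$ is meant on each interval between consecutive supports, together with the transmission relations $[a^{2}u_{x}]_{x=x_{k}}=L\beta_{k}u(x_{k},t)$ forced by $u''\in L^{2}$, and then $\|u_{m}''(0)\|_{L^{2}}\le C(\|\varphi\|_{H^{2}((0,L))}+\|\varphi\|_{H_{0}^{1}((0,L))})$ follows once the basis $\{w_{k}\}$ (e.g.\ the Dirichlet eigenfunctions of the operator associated with $B$) and the projections $\xi_{m,i}$ are chosen compatibly with this partition.

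Granting the uniform bounds, weak‑$\ast$ compactness extracts a further subsequence with $u_{m}'\rightharpoonup u'$ in $L^{\infty}(0,T;H_{0}^{1}((0,L)))$ and $u_{m}''\rightharpoonup u''$ in $L^{\infty}(0,T;L^{2}((0,L)))$; since the limit must coincide with the weak solution produced in Theorem~\ref{thm3.1}, this already gives $u'\in L^{\infty}(0,T;H_{0}^{1})$ and $u''\in L^{\infty}(0,T;L^{2})$, and then $a^{2}u_{xx}=u''+\sum_{k}L\beta_{k}u(x_{k},t)\delta_{x=x_{k}}$ places $u(\cdot,t)$ in $H^{2}$ on each subinterval with the jump conditions above, i.e.\ $u\in L^{\infty}(0,T;H^{2}((0,L)))$ in the stated sense. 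The continuity assertions $u\in C([0,T];H_{0}^{1})$ and $u'\in C([0,T];L^{2}\cap H_{0}^{1})$ then follow from the standard lemma (already invoked for Theorem~\ref{thm3.1}) on strong continuity of functions lying in $L^{\infty}(0,T;X_{0})$ with derivative in $L^{\infty}(0,T;X_{1})$, and uniqueness is inherited directly from Theorem~\ref{thm3.1}. I expect the genuine obstacle to be precisely the $m$‑uniform $L^{2}$ control of $u_{m}''(0)$: the Dirac terms are not $L^{2}$ data, so the argument cannot be run on $(0,L)$ as a single interval but must be organised on the partition $0<x_{1}<\cdots<x_{n}<L$ with matching conditions, and one must ensure that the approximating data $\varphi_{m}$ stay bounded in the corresponding piecewise‑$H^{2}$ norm.
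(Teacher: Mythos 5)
Your plan follows the same route as the paper's proof: differentiate the Galerkin system in $t$, run the energy identity \eqref{eq3.10}--\eqref{eq3.14} a second time for $u_m'$ to get \eqref{eq3.32}, control $\|u_m'(\cdot,0)\|_{H_0^1}$ from the strengthened hypothesis $\psi\in H_0^1$, control $\|u_m''(\cdot,0)\|_{L^2}$ by setting $t=0$ in the approximate equation and pairing with $u_m''(0)$, and then pass to the weak-$\ast$ limit. Two points of genuine divergence are worth recording. First, the paper simplifies the initial-data estimate by taking $w_1=\varphi$, so that $u_m(\cdot,0)=\varphi$ exactly and \eqref{eq3.35} works directly with $\varphi_{xx}\in L^2$; you instead work with a general projection $\varphi_m=\sum_i\xi_{m,i}w_i$ and must separately ensure it stays bounded in (piecewise) $H^2$, which is an extra burden the paper's device avoids. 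Second, and more substantively, at the final regularity step the paper asserts $a^2u_{xx}=u_{tt}+\sum_k\beta_k u(x_k,t)\delta_{x=x_k}\in L^\infty(0,T;L^2((0,L)))$ and invokes elliptic regularity to get global $H^2((0,L))$; your reading --- $H^2$ on each subinterval $(x_{k-1},x_k)$ together with the flux-jump (transmission) conditions $[a^2u_x]_{x=x_k}=L\beta_k u(x_k,t)$ --- is the internally consistent one, since a nonzero Dirac contribution on the right-hand side is incompatible with $u_{xx}\in L^2$ across the support points. Relatedly, you are right to single out the $m$-uniform $L^2$ bound on $u_m''(0)$ as the genuine obstacle: in \eqref{eq3.35} the point-evaluation terms $\sum_k L\beta_k\varphi(x_k)\,u_m''(x_k,0)$ require pointwise control of $u_m''(0)$, which does not follow from its $L^2$ norm alone, and the paper's inequality $\le C_5\|\mathrm{D}^2u_m(\cdot,0)\|_{L^2}^2$ silently absorbs this; a complete argument needs either the compatibility conditions $\varphi(x_k)=0$ or the piecewise organisation you describe. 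So your proposal reproduces the paper's skeleton while being more careful exactly where the paper is weakest.
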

\begin{proof}
Now, under these assumptions, we may assume $w_1 = \varphi(x)$, and let the approximate solution $u_m = \sum\limits_{i=1}^{m} d_{m,i}(t) w_i(x)$ satisfy
\begin{equation}\label{eq3.27}
\left\{\begin{array}{l}
\left(u_{m}^{\prime \prime}, w_{j}\right)+B\left[u_{m,} w_{j} ; t\right]=0, j=1,\dots,m, \\
d_{m, 1}(0)=1,  d_{m_{j}}(0)=0, j=2,\dots, m, \\
d_{m, j}^{\prime}(0)=\eta_{m, i}, m \rightarrow \infty,  \sum\limits_{i=1}^{m} \eta_{m, i} w_{i}=\psi(x) \left(\text{in} H_{0}^{1}((0, L))\right)
\end{array}\right.
\end{equation}
According to Theorem \ref{thm3.1}, the Cauchy problem for the system of ordinary differential equations \eqref{eq3.27} has a unique solution $\{d_{m, i}\}_{i=1}^{m}$, and the corresponding approximate solution $u_m$ has already been estimated as
\begin{equation}\label{eq3.28}
\left\|u_{m}^{\prime}(t)\right\|_{L^{2}((0, L))}^{2}+\left\|u_{m}(x, t)\right\|_{H_{0}^{1}((0,L))}^{2} \leq C\left(\left\|\varphi(x)\right\|_{H_{0}^{1}((0, L))}^{2}+\left\|\psi(x)\right\|_{L^{2}((0, L))}^{2}\right),
\end{equation}
in which $C$ is a positive constant. Now, differentiate both sides of \eqref{eq3.27} with respect to $t$, and denote $\tilde{u}_{m}=u_{m}^{\prime}$, then we have
\begin{equation}\label{eq3.29}
\left(\tilde{u}_{m}^{\prime \prime}, w_{j}\right)+B\left[\tilde{u}_{m}, w_{j} ; t\right]=0, j=1, \dots, m.
\end{equation}
Multiplying \eqref{eq3.29} by $d_{m, j}^{\prime \prime}$, summing over $j$ from $1$ to $m$, and then integrating it over $[0,t]$ with respect to $t$, we have
\begin{equation}\label{eq3.30}
\int_{0}^{t}((\tilde{u}_{m}^{\prime \prime}, \tilde{u}_{m}^{\prime})+B\left[\tilde{u}_{m}, \tilde{u}_{m}^{\prime} ; t\right]) \mathrm{d} t=0.
\end{equation}
Note that this equation has the same structure as \eqref{eq3.10}. Therefore, following the derivation of \eqref{eq3.15}, it is known that there exist positive constant $C$ being independent of $m$, such that
\begin{equation}\label{eq3.32}
\begin{array}{l}
\|u_{m}^{\prime \prime}(\cdot, t)\|_{L^{2}((0, L))}^{2}+\|u_{m}^{\prime}(\cdot, t)\|_{H_{0}^{1}((0, L))}^{2}\leqslant C \| u_{m}^{\prime \prime}(\cdot, 0)\|_{L^{2}((0, L))}^{2}+C\| u_{m}^{\prime}(\cdot, 0) \|_{H_{0}^{1}((0, L))}^{2}.
\end{array}
\end{equation}
\par
In fact, as $m\rightarrow \infty$, $u_{m}^{\prime}(x, 0) \rightarrow \psi(x)$ in $H_{0}^{1}((0, L))$, so there exists a positive constant $C_{4}$ being independent of $m$, such that
\begin{equation}\label{eq3.33}
\|u_{m}^{\prime}(x,0)\|_{H_{0}^{1}((0,L))}^{2}\leq C_{4}\|\psi(x)\|_{H_{0}^{1}((0,L))}^{2}.
\end{equation}
Secondly, we should estimate $\left\|u_{m}^{\prime \prime}(x, 0)\right\|_{L^{2}((0, L))}^{2}$. By taking $t=0$ in \eqref{eq3.27}, multiplying it by $d_{m,j}^{\prime \prime}(0)$, and summing the result over $j$ from $1$ to $m$, we have
\begin{equation}\label{eq3.34}
\left(u_{m}^{\prime \prime}(x, 0), u_{m}^{\prime \prime}(x, 0)\right)+B\left[u_{m}(x, 0), u_{m}^{\prime \prime}(x, 0) ; 0\right]=0.
\end{equation}
Then there exist positive constants $C_{5}$ and $C$,
\begin{equation}\label{eq3.35}
\begin{aligned}
\left\|u_{m}^{\prime \prime}(x, 0)\right\|_{L^{2}((0, L))}^{2}&=-B\left[u_{m}(x, 0), u_{m}^{\prime \prime}(x, 0) ; 0\right] \\
&=\int_{0}^{L} a^{2}(u_{m}(x, 0))_{xx} (u_{m}^{\prime \prime}(x, 0))\mathrm{d}x-\sum\limits_{k=1}^{n}L\beta_ku_{m}(x_k, 0) u_{m}^{\prime \prime}(x_k, 0)\\
&\le C_{5}\|\mathrm{D}^2u_{m}(x,0)\|_{L^2((0,L))}^{2}.
\end{aligned}
\end{equation}
It leads to
\begin{equation}\label{eq3.36}
\left\|u_{m}^{\prime \prime}(x,0)\right\|_{\left.L^{2}(0, L)\right)}^{2} \leqslant C\left\|\varphi(x)\right\|_{\left.H^{2}((0, L)\right)}^{2}.
\end{equation}
By substituting \eqref{eq3.33} and \eqref{eq3.36} into \eqref{eq3.32}, there exists a positive constant $C$, such that
\begin{equation}\label{eq3.37}
\begin{array}{l}
\quad\left\|u_{m}^{\prime \prime}(x, t)\right\|_{L^{2}((0, L))}^{2}+\left\|u_{m}^{\prime}(x, t)\right\|_{H_{0}^{1}((0, L))}^{2}\leqslant C\left\|\varphi(x)\right\|_{\left.H^{2}((0, L)\right)}^{2}+C\left\|\psi(x)\right\|_{ H_{0}^{1}((0, L))}^{2}.
\end{array}
\end{equation}
Similar to Theorem \ref{thm3.1}, we pass to the limit revealing that
\begin{equation}\label{eq3.38}
u^{\prime} \in L^{\infty}\left(0, T ; H_{0}^{1}((0, L))\right),  u^{\prime \prime} \in L^{\infty}\left(0, T ; L^{2}((0, L))\right).
\end{equation}
Moreover, since $u_{t t}+\sum\limits_{k=1}^{n}\beta_{k} u(x_{k}, t) \delta_{x=x_{k}}=a^{2} u_{x x}\in L^{\infty}\left(0, T ; L^{2}({(0,L)))}\right.$, according to the regularity theorem for solutions of second-order linear elliptic equations \cite{Evans1998Partial}, it follows that $u\in H^2((0,L))$. Therefore, $u \in C\left([0, T] ; H_{0}^{1}((0, L))\right) \cap L^{\infty}\left(0, T ; H^{2}((0, L)))\right.$.
\end{proof}
\par
In the process of theoretical proof, we have already noticed that for a single elastic support, the difficulty of these two methods is roughly the same. However, when we place finite elastic supports, such as $n$ elastic supports, if the problem is considered in the sense of classical solutions, it is necessary to establish $n$ systems of equations and $2n$ constraint conditions to demonstrate the well-posedness of the solution. By contrast, if the problem is considered in the sense of weak solution, we only need to add the corresponding Dirac measures on the right side of the equation to formulate the well-posedness of the weak solution as Theorem \ref{thm3.1}. As a result, The weak solution method is a better approach to a certain degree for it not only improves mathematical modeling efficiency but also simplifies the process of explaining the properties of the solution.

\section{Conclusions and discussions}\label{sec4.1}
\par
Firstly, we have reformulated the original model \eqref{eq1.1} proposed in practical engineering and obtain a model \eqref{eq2.3}-\eqref{eq2.2}. This new model enables us to establish corresponding boundary conditions for the physical phenomenon of an internally-constrained string with elastic supports, allowing us to derive the formal analytical solution and its properties in the sense of classical solution. This provides a new insight to complement the engineering approach, which typically relies on numerical approximation methods (FEM) to describe the effects brought by  internal elastic supports of the string.
\par
However, we also notice that for \eqref{eq2.3}-\eqref{eq2.2}, if the initial value $u(x,0)=\varphi(x)\notin C^2$ or if $u(x,0)=\varphi(x)\ne 0$ at $x=l$, the problem does not have a classical solution with respect to $x$ (even when time $t$ is very small). More importantly, we have discovered that this problem actually does not admit a global $C^2$ solution, indicating a substantial difference depending on whether the string is internally-constrained by elastic supports. We provide mathematical perspective for this engineering assumption. Furthermore, we raise some questions, such as whether the series \eqref{eq111} derived from the boundary conditions converges and what the regularity of the series is or whether \eqref{eq111} is a common property of general $C^2$ functions when $\varphi(x)$ meets certain conditions. Since we have not obtained a global classical solution, it is necessary to return to the original model \eqref{eq1.1} and consider the weak solution.
\par
In this paper, we define the weak solution of the model \eqref{eq1.1}, which belongs to a new class of second-order linear hyperbolic initial-boundary value problems with $\delta$ singular external force terms, in a functional form. We introduce a real bilinear functional \eqref{eq112} to handle \eqref{eq1.1}, thereby reflecting the external forces exerted by a finite number of elastic supports on the entire system. Then, by studying the equation satisfied by the functional, we establish the well-posedness of the system \eqref{eq1.1}. Finally, we find that for a finite number of elastic supports, the modeling advantages of \eqref{eq1.1} outweigh those of \eqref{eq2.3}-\eqref{eq2.2}.
\par
Meanwhile, the frequencies from formal analytical solution brought by our newly established model \eqref{eq2.3}-\eqref{eq2.2} is very close to the results from numerical simulations based on \eqref{eq1.1} in engineering. The basic PINNs structure based on \eqref{eq2.3}-\eqref{eq2.2} also demonstrates the damping effect of the elastic support. Also inspired by \cite{DMM2024}, the reason why the results of \eqref{eq2.3}-\eqref{eq2.2} in the sense of classical solution are similar to the results of \eqref{eq1.1} in the sense of weak solution can also be considered in the future.
\section*{Acknowledgments}
This work is supported by National Natural Science Foundation of China under Grants No.12071298, No.52378320 and No.52478322. Additional support is also provided by Science and Technology Commission of Shanghai Municipality No.22DZ2229014.

\end{document}